\numberwithin{equation}{section}
\numberwithin{equation}{section}
\newcommand{\cE}{\mathcal{E}}
\newcommand{\SO}{{\rm SO}}
\newcommand{\G}{{\rm G}}
\newcommand{\Sp}{{\rm Sp}}
\newcommand{\Spin}{{\rm Spin}}
\newcommand{\SU}{{\rm SU}}
\renewcommand{\epsilon}{\varepsilon}
\newcommand{\Hom}{{\mathrm{Hom}}}
\newcommand{\dvol}{\mathop\mathrm{dvol}\nolimits}
\newcommand{\id}{\mathrm{id}}
\renewcommand{\Re}{\mathop{\mathrm{Re}}}
\newcommand{\tr}{\mathop{\mathrm{tr}}\nolimits}
\newcommand{\Fr}{{\rm Fr}}
\def\<{\mathopen{}\left<}
\def\>{\right>\mathclose{}}
\def\({\mathopen{}\left(}
\def\){\right)\mathclose{}}
\newtheorem{Mtheorem}{Main Theorem}
\newtheorem{theorem}{Theorem}
\newtheorem{corollary}[theorem]{Corollary}
\newtheorem{example}[theorem]{Example}
\newtheorem{lemma}[theorem]{Lemma}
\newtheorem{proposition}[theorem]{Proposition}
\newtheorem{remark}[theorem]{Remark}
\numberwithin{equation}{section}
\newtheorem*{theorem*}{Theorem}
\newtheorem*{prop*}{Proposition}
\newtheorem*{corol*}{Corollary}
\author{Andrew Clarke}
\address{Instituto de Matem\'atica, Universidade Federal do Rio de Janeiro, Av. Athos da Silveira Ramos 149, Rio de Janeiro, RJ, 21941-909, Brazil}
\email{andrew@im.ufrj.br}
\author{Gon\c calo Oliveira}
\address{Universidade Federal Fluminense, IME-GMA, Niter\'oi, Brazil}
\email{galato97@gmail.com}
\title{$\Spin(7)$-instantons from evolution equations}
\date{\today}
\begin{document}
\maketitle

\begin{abstract}
	
	In this paper we study $\Spin(7)$-instantons on asymptotically conical $\Spin(7)$-orbifolds (and manifolds) obtained by filling in certain squashed $3$-Sasakian $7$-manifolds.
We construct a $1$-parameter family of explicit $\Spin(7)$-instantons. Taking the parameter to infinity, the family (a) bubbles off an ASD connection in directions transverse to a certain Cayley submanifold $Z$, (b) away from $Z$ smoothly converges to a limit $\Spin(7)$-instanton that extends across $Z$ onto a topologically distinct bundle, (c) satisfies an energy conservation law for the instantons and the bubbles concentrated on $Z$, and (d) determines a Fueter section, in the sense of \cites{Donaldson2009,Haydys2011,Walpuski2017}.
	
\end{abstract}

\tableofcontents




\section{Introduction}

In this article we study gauge theory on $8$-dimensional Riemannian manifolds whose holonomy is contained in the group $\Spin(7)$. This is to say, $\Spin(7)$-instantons.

\subsection*{$\Spin(7)$-instantons}




A $\Spin(7)$-manifold $M$ is a Riemannian manifold whose holonomy is the Lie group $\Spin(7)$. Together with $\G_2$, this is one of the exceptional cases in Berger's classification of the possible Riemannian holonomy groups \cites{Berger1955} for non-symmetric irreducible simply connected Riemannian manifolds. A $\Spin(7)$-manifold comes equipped with a certain closed $4$-form $\Theta$ known as the Cayley form. It turns out that $\Theta$ is actually a calibration and so the submanifolds $N\subseteq M$ satisfying $\Theta |_N = \dvol_{g|_N}$ minimize volume in their homology class. Such manifolds are known as Cayley submanifolds and have both known and expected relations with certain connections known as $\Spin(7)$-instantons. Let $\G$ be a compact semisimple Lie group, a connection $A$ on a $\G$-bundle is said to be a $\G_2$-instanton if its curvature $F_A$ satisfies
\begin{eqnarray}\label{eq:Spin7Inst}
*F_A =-F_A \wedge \Theta,
\end{eqnarray}
which suggests similarities with the theory of anti-self-dual connections in $4$-dimensions. The questions that we consider in this paper arise from a reduction of the partial differential equation $d\Theta=0$ to a system of ordinary differential equations, and the instantons that can be obtained by a reduction of Equation (\ref{eq:Spin7Inst}) to a system of ODE's.

The chronology of the study of $\Spin(7)$-manifolds is relatively well known. The classification of Riemannian holonomy groups of non-symmetric, irreducible, and simply connected Riemannian manifolds was made by Berger \cites{Berger1955} and then Simons \cites{Simons1962}, after which Bonan \cites{Bonan1966} showed that the $\Spin(7)$-holonomy condition was equivalent to the existence of a parallel $4$-form. Fernandez \cites{Fernandez86} then proved it was sufficient that the form be closed to have the holonomy reduction. However, it was only 32 years after Berger's classification of the possible Riemannian holonomy groups that in 1987 Bryant \cites{Bryant1987} constructed the first manifolds with $\Spin(7)$-holonomy. Soon after this, the first complete example was found by Bryant and Salamon in \cites{BS89}. This example constitutes the main point of departure of our work in this paper. As forcompact manifolds with $\Spin(7)$-holonomy, these were first given by Joyce in \cites{Joyce1999}.

The history of gauge theory on $\Spin(7)$-manifolds is less well-known. The first-order instanton equation in higher dimensions appeared in the physics literature over 35 years ago \cites{Corrigan1983,Ward1984}. In the mathematics literature on gauge theory in higher dimensions, the foundational references that suggest many geometric motivations, and show similarities and differences with the $4$-dimensional case remain as \cites{Donaldson1998,Donaldson2009,Tian2000,Tao2004}. The study of $\Spin(7)$-instantons on the compact manifolds constructed by Joyce was initiated by  \cites{Lewis1998} and was later clarified and extended by Walpuski \cites{Walpuski2017_Spin} and Tanaka \cites{Tanaka2012}. Nontrivial examples on complete noncompact manifolds were however still unknown until our work in this paper.
 The first results in this direction were given in the first named author's work in \cites{Clarke14}, where instantons with singularities are constructed. In \cites{Lotay2017}, the instanton equation is regarded as an evolution equation and under symmetry assumptions the authors reduce the instanton equation to an ODE. This ODE was then integrated yielding a family of solutions which also develop singularities and may be related to those found earlier by the first named author. We also note the article of Harland and N\"olle \cites{Harland2012}, which studies instantons in the presence of Killing spinors, and on the associated cone manifolds.

\subsection*{Summary of the main results}

Let $Z$ be an Einstein $4$-orbifold with positive scalar curvature and anti-self-dual Weyl curvature. Furthermore, suppose that $Z$ is spin, for instance if it is smooth then it must be $\mathbb{S}^4$. Then, the total space of its bundle of positive chirality spinors, which we shall denote by $\pi: E \rightarrow Z$, admits the $\Spin(7)$-holonomy metric constructed by Bryant-Salamon \cites{BS89}. In particular, the zero section $Z \subset E$ is the unique compact Cayley submanifold of the Bryant-Salamon manifold.

We shall now state a non-technical version of our main results regarding $\Spin(7)$-instantons with gauge group $\SU(2)$. In order to write such a non-technical statement we have decided to omit several other results and we encourage the reader to find these in the next subsection which outlines the results and proofs in this paper. In the statement below we shall regard the instantons as connections on the complex rank $2$ vector bundles associated with respect to the standard $\SU(2)$-representation.

\begin{Mtheorem}
There is a real $1$-parameter family of $\Spin(7)$-instantons $\lbrace A_{y_0} \rbrace_{y_0 \in [0,+\infty)}$ on the trivial bundle $\mathbb{C}^2 \times E \rightarrow E$ which are irreducible for $y_0 \neq 0$. As $y_0 \rightarrow + \infty$, the instantons $A_{y_0}$ converge uniformly, with all derivatives, on compact subsets of $E \backslash Z$ to an instanton $A_{\lim}$. Furthermore,
\begin{enumerate}
	\item[(a)] $A_{\lim}$ extends smoothly across $Z$, and yields a $\Spin(7)$-instanton on the bundle $\pi^* E \rightarrow E$.
	\item[(b)] As $y_0 \rightarrow + \infty$, the $\Spin(7)$-instantons bubble off a charge-$1$ anti-self-dual connection transversely to the Cayley $Z\subset E$.
	\item[(c)] Regarding the energy densities $| F_{A_{y_0}} |^2$, $| F_{A_{\lim}} |^2$ as currents, as $y_0 \rightarrow + \infty$ 
	$$| F_{A_{y_0}} |^2 \rightharpoonup | F_{A_{\lim}} |^2 + 8 \pi^2 \delta_{Z},$$
	where $\delta_{Z}$ is the 
	current of integration associated with the Cayley submanifold $Z \subset E$.
\end{enumerate}
\end{Mtheorem}

These results may be interpreted as follows. Part (a) in the statement above illustrates two different phenomena: an example of a removable singularity phenomenon, and of a bundle change. Indeed, we have a sequence of $\Spin(7)$-instantons converging to a limiting instanton outside a calibrated submanifold $Z$. The limit connection extends across $Z$, although it does so on a topologically distinct bundle. Parts (b) and (c) state that as $y_0 \rightarrow + \infty$ the $\Spin(7)$-instantons concentrate along the calibrated submanifold $Z \subset E$ and as stated in (b), bubble anti-self-dual connections on the $\mathbb{R}^4$'s transverse to $Z$. We may also interpret (c) as an energy conservation law, indeed $8\pi^2$ is precisely the energy of the charge-1 anti-self-dual connection on $\mathbb{R}^4$, i.e. the charge of the bubble.

The $\Spin(7)$-instanton equation implies that the family of anti-self-dual connections bubbling of transversely to $Z$ must satisfy an equation known as the Fueter equation. This is a quaternionic analogue of the Cauchy-Riemann equations for a function from $Z$ to the moduli space of anti-self-dual connections in the normal directions to $Z$. From our work we explicitly obtain a solution of this equation which interpreted in the correct way is simply a certain identity map, see Section \ref{subsec:Fueter}. For more information on the Fueter equation in this context, see \cites{Walpuski2017_Spin}.

\subsection*{Outline of the proof}

Consider $E \backslash Z$, the complement of the zero section $Z$ in $E$. Then, $E \backslash Z \cong (0,\infty)_r \times X$, where $X \rightarrow Z$ is the total space of the $\SU(2)$-bundle with respect to which $E$ is associated to the standard representation of $\SU(2)$ on $\mathbb{C}^2$. In this polar type decomposition of $E \backslash Z$ we can write $\Theta=dr\wedge \varphi+\psi$ for $\varphi=\varphi(r)$ and $\psi=*_\varphi\varphi$ depending on $r\in (0,\infty)$. The explicit way in which this happens is given in Section \ref{sec:Structure} and depends on two functions $t(r)$ and $\nu(r)$ of the radial coordinate $r$. The condition for $\Theta$ to be closed determines a non-linear system of ordinary differential equations for $t$ and $\nu$. We write solutions $(t(r),\nu(r))$ of this system as integral curves of a $1$-form on $\mathbb{R}^2$. By a homogeneity argument, we show that initial conditions at $r=0$ can always be found so that complete solutions exist and give rise to a unique $\Spin(7)$-holonomy metric up to scaling. 
The resulting metric is always asymptotically conical and for $X=S^7$, i.e. for $Z= \mathbb{S}^4$, we recover the metric of Bryant and Salamon \cites{BS89} in the form given by \cites{Gibbons1990}.          

On the trivial $\G$-bundle over $X$, connections are determined by $\mathfrak{g}$-valued $1$-forms on $X$. We consider the trivial $\SU(2)$-bundle over $E \backslash Z$ equipped with a family of connections depending on a function $x(r)$ of the radial coordinate $r$. Then, the condition that the resulting connection be a $\Spin(7)$-instanton turns into an ordinary differential equation for $x(r)$. The first task at hand now is to determine initial conditions for $x(r)$ at $r=0$ so that the resulting $\Spin(7)$-instantons smoothly extend across $Z \subset E$. As an application of the removable singularity theorem of Tao and Tian we show that such $\Spin(7)$-instantons extend across the zero section, up to gauge, if and only if their curvature remains bounded in a neighbourhood of $Z$. This then leads to a dichotomy on the possible values of $x(0)$ and thus to two different singular initial value problems for the function $x(r)$. We solve these using a refinement, due to Malgrange, of the fundamental existence and uniqueness theorem for solutions to ordinary differential equations. It turns out that the different singular initial value problems arising from the dichotomy mentioned above are related to the different topological types of the bundles on which the resulting $\Spin(7)$-instantons extend across $Z$. The precise result we obtain is as follows.

\begin{theorem*}
Let $A$ be a smooth $\Spin(7)$-instanton for $\Theta$ on $E$ arising from the procedure above. Then, either
\begin{enumerate}
\item $A|_Z$ is the connection on $E$ induced by the Levi-Civita connection of the anti-self-dual, Einstein metric on $Z$. In this case there is a unique $\Spin(7)$-instanton $A_{\lim}$ on $\pi^*E \rightarrow E$ with this property.
\item $A|_Z$ is the flat connection on the trivial $\mathbb{C}^2$-bundle over $Z$, in which case there is a real $1$-parameter family of such $\Spin(7)$-instantons. These live on the trivial bundle $E \times \mathbb{C}^2$, are parametrized by $y_0 \in [0, + \infty)$ with $A_0$ being flat and the $A_{y_0}$ being irreducible for $y_0 >0$.
 \end{enumerate}
\end{theorem*}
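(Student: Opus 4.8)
The plan is to reduce the $\Spin(7)$-instanton equation~\eqref{eq:Spin7Inst} on $E\setminus Z\cong(0,\infty)_r\times X$ to a single scalar ODE and then analyse its solutions as $r\to0$. On the trivial $\SU(2)$-bundle over $(0,\infty)_r\times X$ one writes the connection in terms of the single profile function $x(r)$ as in the outline above, and computes $F_A$ in the moving frame using the structure equations of $X$ together with the metric data $t(r),\nu(r)$ of Section~\ref{sec:Structure}: it has a ``radial'' component linear in $\dot x$ and a ``fibre'' component equal to a fixed quadratic polynomial $q(x)$ (concretely $q(x)=x^{2}-x$ after the natural normalisation, coming from $x\,da+\tfrac{x^{2}}{2}[a,a]$ along the fibres). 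Substituting into $*F_A=-F_A\wedge\Theta$ and simplifying collapses the instanton equation to a first-order equation $\dot x=f(r,x)$ with $f$ rational in $x$ and with a regular singular point at $r=0$. First I would record this ODE and the frame expression for $F_A$ explicitly, since both the curvature bound below and the identification of the two limiting bundles are read off from them.

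Next I would pin down the admissible initial data at $r=0$. Near $Z$ the space $E$ is modelled on the total space of the (rank-$4$) normal bundle of $Z$, with $r$ the fibrewise radius, so $Z$ has codimension $4$ and the removable singularity theorem of Tao--Tian applies: a solution $A$ extends to a smooth $\Spin(7)$-instanton across $Z$, up to gauge, if and only if $|F_A|$ stays bounded as $r\to0$. Inspecting the frame components of $F_A$, which carry factors $r^{-1}\dot x$ and $r^{-2}q(x)$, boundedness forces $x(r)\to x(0)$ with $x(0)$ a root of $q$, i.e.\ the dichotomy $x(0)\in\{0,1\}$. These two values are precisely the two alternatives in the statement: for $x(0)=1$ the fibre part of $A$ survives along $Z$, the restriction $A|_Z$ is the connection induced on $E$ by the Levi--Civita connection of the anti-self-dual Einstein metric on $Z$, and the asymptotics of $A$ near $Z$ are those of the natural connection on $\pi^{*}E$; for $x(0)=0$ the fibre part vanishes along $Z$, $A|_Z$ is flat, and the relevant bundle near $Z$ is the trivial $\mathbb{C}^{2}$-bundle.

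I would then solve the two resulting singular initial value problems with Malgrange's refinement of the existence and uniqueness theorem for ODEs at a regular singular point, the decisive input being the sign of the indicial exponent of $\dot x=f(r,x)$ at each equilibrium. At $x(0)=1$ the indicial exponent is negative, so there is a \emph{unique} solution $x(r)$ with $x(0)=1$; the associated connection is the claimed $A_{\lim}$ on $\pi^{*}E\to E$, and uniqueness of the ODE solution gives uniqueness of $A_{\lim}$ among the instantons from this construction with the stated restriction to $Z$. At $x(0)=0$ the indicial exponent is positive---mirroring the fact that the basic anti-self-dual instanton on $\mathbb{R}^{4}$ has profile $\sim r^{2}/\lambda^{2}$ near the origin with free scale $\lambda$---so Malgrange's theorem produces a one-parameter family $x(r)=y_0 r^{2}+O(r^{3})$ with $y_0\in[0,\infty)$ free; $y_0=0$ is the constant solution $x\equiv0$ giving the flat connection $A_0$, and for $y_0>0$ the solution is non-constant. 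For $y_0>0$ the connection $A_{y_0}$ is irreducible: a non-constant $x$ makes the radial and fibre parts of $F_{A_{y_0}}$ point in non-commuting directions of $\su(2)$, so by Ambrose--Singer the holonomy algebra is all of $\su(2)$; and distinct values of $y_0$ give distinct, indeed non-gauge-equivalent, connections, as they are distinguished by the germ of $x$ at $r=0$.

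Finally one has to check that every solution above is defined for all $r\in(0,\infty)$ so that it genuinely yields a $\Spin(7)$-instanton on $E$: a priori bounds confining $x(r)$ to a compact interval (obtained from the sign structure of $f$ by comparison) rule out blow-up at finite $r$ and control the behaviour as $r\to\infty$, consistently with the asymptotically conical geometry. The main obstacle, in my view, is the analysis at $r=0$: computing the indicial data of $\dot x=f(r,x)$ correctly and applying Malgrange's theorem to extract exactly \emph{one} solution in the first case and a genuine \emph{one-parameter} family in the second; and, equally delicate, identifying the two cases with the two topological types of bundle---$\pi^{*}E$ versus the trivial $\mathbb{C}^{2}$-bundle---by matching the asymptotics of $A$ near $Z$ to the respective clutching data.
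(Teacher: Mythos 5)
Your proposal follows essentially the same route as the paper: reduce the instanton equation to the scalar ODE for $x(r)$, invoke the Tao--Tian removable-singularity theorem to reduce smooth extension across $Z$ to boundedness of $|F_A|$, read off from the frame expression for $|F_A|^2$ (which carries a term $\sim x^2(x-1)^2/t^4$) that $x(0)\in\{0,1\}$ with $x-x(0)=O(r^2)$, and then solve the two singular initial-value problems. At $x(0)=1$ your negative-indicial-exponent argument is exactly what the paper does after the substitution $x=1+r^2y$, where Malgrange's theorem forces $y(0)=-1/\nu_0^2$ and gives uniqueness. At $x(0)=0$ the paper instead substitutes $x=r^2y$, observes that the resulting equation for $y$ is \emph{regular} (not singular) in $r$, and applies the ordinary existence--uniqueness theorem with $y(0)$ as the free parameter; this is cleaner than invoking Malgrange with a positive indicial exponent, though the two viewpoints give the same count of free data. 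Your addition of the Ambrose--Singer argument for irreducibility when $y_0>0$ is sound and is a point the paper asserts but does not spell out.

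The one genuine gap is that you assert $y_0\in[0,\infty)$ without ruling out $y_0<0$. Your a priori bound argument (sandwiching $x$ between $0$ and $1-2t^2/\nu^2$ and comparing) confines only the \emph{positive} solutions; for $y_0<0$ one has $x<0$ near $r=0$, and since $1-2t^2/\nu^2>0$ the ODE gives $\dot x<-2x^2/t$, whence $x^{-1}$ increases to zero in finite $r$-distance and $x\to-\infty$ at a finite radius. The paper carries out precisely this separable differential inequality to show these solutions blow up and are only locally defined near $Z$, which is what makes the global family exactly $[0,\infty)$ rather than all of $\mathbb{R}$. Without this step your argument establishes that the family contains at least the solutions with $y_0\geq 0$ but does not close the classification.
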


Our next theorem exhibits a phenomenon that also appears in \cites{Lotay2018} in the context of invariant $\G_2$-instantons. This theorem gives geometric meaning to the constant $y_0$ that parametrizes our family of $\Spin(7)$-instantons, as they appear in the previous theorem. For this, we need to consider the standard charge-$1$ anti-self-dual instantons on $\mathbb{R}^4$, with respect to the Euclidean metric. Choosing the origin as the centre of charge concentration, these appear in a family $\{A^{asd}_\kappa\}$ for $\kappa>0$. At a point $z\in Z$, denote by $N_zZ\cong \mathbb{R}^4$ (with induced metric) the normal space at $z$ to the submanifold $Z\subseteq E$.

\begin{theorem*}
	Let $\lbrace A_{y_0} \rbrace_{y_0}$ be a sequence of $\Spin(7)$-instantons given in the previous theorem with $y_0\to \infty$. Then, the following are true:
	\begin{itemize}
		\item[(a)] Given any $\kappa>0$, there is a sequence $0<\lambda=\lambda(y_0,\kappa)\to 0$ with the following significance: for all $z \in Z$, $(s_{z,\lambda})^* A_{y_0}$ uniformly converges (with all derivatives) to $A^{asd}_{\kappa}$ on $B_1\subseteq N_zZ$. 
		\item[(b)] The connections $A_{y_0}$ uniformly converge (with all derivatives) to $A_{\lim}$ on all compact subsets of $E \backslash Z $. 
	\end{itemize}
	\end{theorem*}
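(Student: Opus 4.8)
The plan is to exploit the fact that everything in sight is reduced to ODEs in the radial variable $r$, so that the convergence statements become statements about the asymptotic behaviour of the solution $x(r)=x_{y_0}(r)$ to the instanton ODE as the initial datum $y_0\to\infty$. First I would set up the rescaling map $s_{z,\lambda}$ explicitly: near a point $z\in Z$ the normal space $N_zZ\cong\mathbb{R}^4$ is identified, via the polar decomposition $E\backslash Z\cong(0,\infty)_r\times X$ and the fibre of $X\to Z$ over $z$, with $(0,\infty)_r\times S^3$; the dilation $v\mapsto\lambda v$ on $N_zZ$ is then $r\mapsto\lambda r$ together with the identity on the $S^3$-factor (up to the $\SO(4)$-action, which is irrelevant since $A^{asd}_\kappa$ is $\SU(2)$-invariant). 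Under this identification the pullback connection $(s_{z,\lambda})^*A_{y_0}$ is again of the radial ansatz form used in Section~\ref{sec:Structure}, governed by the rescaled profile $\tilde x(r)=x_{y_0}(\lambda r)$, and one must compare this with the profile of the BPST instanton $A^{asd}_\kappa$ on $\mathbb{R}^4$, which in the same $S^3$-invariant gauge is an explicit elementary function of $\kappa$ and $r$.

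The core of the argument is then the asymptotic analysis of $x_{y_0}$ near $r=0$. From the singular initial value problem solved via Malgrange's theorem (case (2) of the previous theorem), $x_{y_0}(r)$ has a convergent expansion near $r=0$ whose leading behaviour is controlled by $y_0$; the point is that, after the rescaling $r\mapsto\lambda r$ with $\lambda=\lambda(y_0,\kappa)$ chosen so that the leading coefficient matches that of $A^{asd}_\kappa$, the rescaled solutions $x_{y_0}(\lambda\,\cdot)$ converge, uniformly on $[0,1]$ with all derivatives, to the ASD profile. I would establish this by showing that $\lambda(y_0,\kappa)\to 0$ (because the concentration scale of $A_{y_0}$ shrinks as $y_0\to\infty$) and that in the blown-up variable the instanton ODE for $\tilde x$ converges, coefficient-by-coefficient on compact $r$-intervals, to the flat-space ASD ODE on $\mathbb{R}^4$ — the metric coefficients $t(r),\nu(r)$ and the warping functions all tend, after rescaling, to their Euclidean values near $r=0$ since the Bryant--Salamon metric is smooth across $Z$ and hence asymptotically Euclidean at small scales transverse to $Z$. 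Continuous dependence of ODE solutions on parameters and initial conditions (again in the refined Malgrange form, to handle the regular-singular point $r=0$) then upgrades matching of the leading coefficient to $C^\infty_{\mathrm{loc}}$ convergence on $B_1\subseteq N_zZ$; uniformity in $z\in Z$ is automatic because the whole construction is equivariant under the transitive symmetry group acting on $Z$ (all the relevant constants are independent of $z$). This proves part (a).

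Part (b) is the complementary statement: away from $Z$, i.e. on $r\ge\delta>0$, the solutions $x_{y_0}(r)$ converge to the profile $x_{\lim}(r)$ of $A_{\lim}$ as $y_0\to\infty$. Here I would argue by continuous dependence on initial data together with a limiting argument for the initial conditions: as $y_0\to\infty$ the germ of $x_{y_0}$ at $r=0$ degenerates, but for any fixed $r_0>0$ the values $(x_{y_0}(r_0),x_{y_0}'(r_0))$ converge to $(x_{\lim}(r_0),x_{\lim}'(r_0))$ — one extracts this by an a~priori bound (monotonicity or energy estimate for the instanton ODE, ensuring the family is precompact on $[\delta,\infty)$) and identifying any subsequential limit as a solution of the same ODE whose behaviour as $r\to0$ forces it, by uniqueness in case (1), to be $x_{\lim}$. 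Smooth convergence on compact subsets of $E\backslash Z$ then follows from smooth dependence of ODE solutions on initial conditions away from the singular point. The main obstacle I anticipate is the first: pinning down the precise $y_0$-dependence of the near-$Z$ expansion of $x_{y_0}$ and showing the rescaled ODE converges to the genuine flat-space ASD equation — this requires carefully tracking how the warping functions of the Bryant--Salamon metric enter the instanton ODE and checking that their small-$r$ Taylor expansions reduce, after the $\lambda$-rescaling, to the constant coefficients of the round $\mathbb{R}^4$; the matching of the single parameter $\lambda$ to $\kappa$ is the delicate bookkeeping step, but once the limiting ODE is identified the convergence is a soft consequence of ODE dependence theorems.
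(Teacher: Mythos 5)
Your proposal is correct in spirit and would likely work, but it follows a genuinely different and significantly longer route than the paper. The key fact you seem not to have noticed is that the instanton ODE is explicitly solvable: the substitution $y=x/t^2$ turns Equation~\eqref{eq:ODE} into the separable equation $\partial_r y = -2ty^2$, yielding the closed-form expressions in Proposition~\ref{prop:Explicit_Formulas}, namely
$A_{y_0} = \tfrac{y_0\,t^2(r)}{1+2y_0\int_0^r t}\sum_i\eta_i\otimes T_i$
and
$A_{\lim} = \tfrac{t^2(r)}{2\int_0^r t}\sum_i\eta_i\otimes T_i$.
With these in hand the paper's proof of (a) is a two-line Taylor expansion: $t(\lambda r)=\lambda r+O((\lambda r)^3)$ and $\int_0^{\lambda r}t=\tfrac{1}{2}\lambda^2 r^2+O((\lambda r)^4)$ give $(s_{z,\lambda})^*A_{y_0}=\tfrac{y_0\lambda^2 r^2+O(y_0(\lambda r)^4)}{1+y_0\lambda^2 r^2+O(y_0(\lambda r)^4)}\sum\eta_i\otimes T_i$, so the choice $\lambda=\sqrt{\kappa/y_0}$ produces the BPST profile with the explicit error $\|(s_{z,\lambda})^*A_{y_0}-A^{asd}_\kappa\|_{C^k(B_1)}\le C_k\kappa^2/y_0$, and (b) is an equally direct estimate $|A_{y_0}-A_{\lim}|\le C_K/(1+y_0)$ on compact $K\subset E\setminus Z$. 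Your soft approach --- matching leading coefficients of the near-$r=0$ expansion, arguing that the rescaled ODE degenerates to the flat ASD ODE, invoking Malgrange-type continuous dependence at the regular-singular point, and then a precompactness-plus-uniqueness argument for (b) --- is a valid alternative, and it would be the only route if the ODE were not integrable; but it requires a fair amount of machinery (a~priori bounds, identifying the blow-up scale $\lambda\sim\sqrt{\kappa/y_0}$ from asymptotics rather than reading it off, verifying $C^\infty$ and $z$-uniformity abstractly) that the explicit formulas make unnecessary. The one concrete thing I would ask you to tighten is precisely the step you flagged as ``delicate bookkeeping'': without the explicit solution, pinning down that $\lambda=\sqrt{\kappa/y_0}$ (and not some other null-sequence) is the correct scale, and that the convergence is in $C^k$ for every $k$ with constants uniform in $z$, needs a real argument, not just an appeal to general ODE dependence theorems.
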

Here, the maps $s_{z,\lambda}:B(0,1)\to B_\lambda(z)$ are given by the composition of dilation on the normal bundle with the Riemannian exponential map. 
We note here how this result relates the two families of connections $A_{\lim}$ and $\{A_{y_0}\}$ that appear in previous theorem. Indeed, this can be interpreted as saying that in order to compactify the moduli space of instantons on the trivial bundle $\mathbb{C}^2 \times E \rightarrow E$ we must add (at least one) instanton on the topologically inequivalent bundle $\pi^* E \rightarrow E$. 

In a sense, this indicates that the curvatures of the sequence of connections $\{ A_{y_0} \}$, in directions normal to $Z$, concentrate along $Z$ in the same way that the standard $4$-dimensional instanton curvature  concentrates at the origin, as $\kappa\to 0$. This heuristic statement can be made more precise by our final main theorem, which indicates a form of energy conservation between the connections $\{ A_{y_0} \}$, $A_{\lim}$ and $A^{asd}_\kappa$. 

\begin{theorem*}\label{thm:delta}
	Let $y_0>0$, then the function $| F_{A_{y_0}} |^2 - | F_{A_{\lim}} |^2$ is integrable. Moreover, as $y_0 \nearrow + \infty$ this functions converge as currents to $8 \pi^2 \delta_{Z}$, i.e. for all compactly supported $f \in C^{\infty}_0(E, \mathbb{R})$ 
	$$\lim_{y_0 \rightarrow + \infty} \int_{E} f  (| F_{A_{y_0}} |^2 - \vert F_{A_{\lim}} |^2)  = 8 \pi^2 \int_{Z} f ,$$
	where in the later integral we use the induced metric on the Cayley submanifold $Z \subset E$.
\end{theorem*}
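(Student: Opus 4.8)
\emph{Strategy.} The plan is to combine a Chern--Weil computation, which pins down the total mass of the density $|F_{A_{y_0}}|^2-|F_{A_{\lim}}|^2$ for every $y_0>0$, with the convergence results already proved, which force that mass to concentrate on $Z$ as $y_0\to\infty$. First, by the cohomogeneity-one symmetry, on $E\setminus Z\cong(0,\infty)_r\times X$ both $|F_{A_{y_0}}|^2$ and $|F_{A_{\lim}}|^2$ are functions of $r$ alone, explicit in the ODE solutions of the earlier sections. Near $Z$ each of $A_{y_0}$ (on $\C^2\times E$) and $A_{\lim}$ (on $\pi^*E$) extends smoothly by the previous theorem, hence has bounded curvature, so the difference is bounded on the finite-volume region $\{r\le1\}$ and integrable there; on the asymptotically conical end both radial profiles approach the same conical model, and the asymptotic analysis of the instanton ODE (the Malgrange-type estimates) gives $\bigl|\,|F_{A_{y_0}}|^2-|F_{A_{\lim}}|^2\,\bigr|=O(r^{-8-\eta})$ for some $\eta>0$, uniformly for $y_0\ge1$, so the difference is integrable (since $\dvol\sim r^7\,dr$ there) and $\int_{\{r\ge R\}}\bigl|\,|F_{A_{y_0}}|^2-|F_{A_{\lim}}|^2\,\bigr|\,\dvol\le CR^{-\eta}$ uniformly in $y_0$.

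\emph{Total mass.} For a $\Spin(7)$-instanton $A$ the instanton equation and $d\Theta=0$ give $|F_A|^2\,\dvol=\tr(F_A\wedge F_A)\wedge\Theta$, so with $p_A:=\tr(F_A\wedge F_A)$ (closed) one has $\int_E(|F_{A_{y_0}}|^2-|F_{A_{\lim}}|^2)\,\dvol=\int_E(p_{A_{y_0}}-p_{A_{\lim}})\wedge\Theta$. Over $E\setminus Z$ the bundles $\C^2\times(E\setminus Z)$ and $\pi^*E|_{E\setminus Z}$ are isomorphic via the radial (tautological) trivialization, which does not extend over $Z$ since $\pi^*E$ is nontrivial; hence there $p_{A_{y_0}}-p_{A_{\lim}}=d\,\mathrm{cs}$ with $\mathrm{cs}=\mathrm{cs}(A_{\lim},A_{y_0})$ the Chern--Simons transgression, and Stokes on $\{\epsilon<r<R\}$ turns the integral into $\int_{\{r=R\}}\mathrm{cs}\wedge\Theta-\int_{\{r=\epsilon\}}\mathrm{cs}\wedge\Theta$. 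The term at $r=R$ tends to $0$ as $R\to\infty$ by the uniform decay above. For the term at $r=\epsilon$: on the normal $\epsilon$-sphere at $z\in Z$, $\mathrm{cs}(A_{\lim},A_{y_0})$ in the radial trivialization differs from its value in a trivialization extending over the normal disk by the Chern--Simons $3$-form of a gauge transformation of degree $1$ (the normalized tautological section --- consistent with the charge-$1$ bubble of the previous theorem), namely $\pm8\pi^2$, while the disk-trivialization value equals $\int_{D^4_\epsilon}(p_{A_{y_0}}-p_{A_{\lim}})\to0$ as $\epsilon\to0$ for fixed $y_0$; also $\Theta|_{\{r=\epsilon\}}\to\pi^*\dvol_Z$ as $\epsilon\to0$ by the structure of $\Theta$ in Section~\ref{sec:Structure}. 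Fibre integration over the collapsing $S^3$-bundle $\{r=\epsilon\}\to Z$ then gives $\int_{\{r=\epsilon\}}\mathrm{cs}\wedge\Theta\to\pm8\pi^2\vol(Z)$, so, with orientations fixed so the sign is $+$, $\int_E(|F_{A_{y_0}}|^2-|F_{A_{\lim}}|^2)\,\dvol=8\pi^2\vol(Z)$ for every $y_0>0$.

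\emph{Concentration.} The signed measures $\nu_{y_0}:=(|F_{A_{y_0}}|^2-|F_{A_{\lim}}|^2)\,\dvol$ have constant total mass $8\pi^2\vol(Z)$, are tight at infinity, and satisfy $|\nu_{y_0}|(K)\to0$ for every compact $K\subseteq E\setminus Z$ by the uniform convergence $A_{y_0}\to A_{\lim}$ there; hence $\nu_{y_0}$ converges weakly-$*$ to a measure on $Z$ of total mass $8\pi^2\vol(Z)$. To identify it, use the fibre decomposition: for $f\in C^\infty_0(E,\R)$,
$$
\int_E f\,d\nu_{y_0}=\int_0^\infty\bigl(|F_{A_{y_0}}(r)|^2-|F_{A_{\lim}}(r)|^2\bigr)\,\Phi_f(r)\,dr,\qquad \Phi_f(r):=\int_{\{r\}\times X}f\,d\sigma_r .
$$
Since near $Z$ the hypersurfaces $\{r\}\times X$ are $3$-sphere bundles of radius $\sim r$, one has $\Phi_f(r)=2\pi^2r^3\int_Z f\,\dvol_Z+o(r^3)$ and $\vol(\{r\}\times X)=2\pi^2r^3\vol(Z)+o(r^3)$ as $r\to0$; combined with the case $f\equiv1$ of the previous step this gives $r^3\bigl(|F_{A_{y_0}}(r)|^2-|F_{A_{\lim}}(r)|^2\bigr)\,dr\rightharpoonup 4\,\delta_0$ on $[0,\infty)$, whence $\int_E f\,d\nu_{y_0}\to 4\cdot2\pi^2\int_Z f\,\dvol_Z=8\pi^2\int_Z f\,\dvol_Z$.

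\emph{Main obstacle.} The structure is soft; the work is quantitative. One needs the ODE solutions to approach both the conical model and $A_{\lim}$ strictly faster than the borderline decay rate, uniformly in $y_0$, so that the curvature-density difference is integrable and the $r=R$ boundary term vanishes; and one needs the precise local model of $A_{\lim}$ and of $\Theta$ near $Z$ (the singular $\sim g^{-1}dg$, degree-$1$ behaviour in the radial gauge, and $\Theta|_{\{r=\epsilon\}}\to\pi^*\dvol_Z$) to pin the Chern--Simons flux through the collapsing fibres to $\pm8\pi^2$. Both are a careful reading of Section~\ref{sec:Structure} and the instanton ODE rather than new ideas. (Alternatively, the total mass may be read off directly from the explicit energy-density profile: after rescaling it is, near $Z$, that of the charge-$1$ instanton on $\R^4$, whose radial energy integral $\int_0^\infty|F|^2r^3\,dr$ equals $8\pi^2/(2\pi^2)=4$; this route replaces the topological step by the explicit no-neck-energy-loss estimate, again available from the ODE.)
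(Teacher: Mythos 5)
Your proof takes a genuinely different route from the paper. The paper's argument is a direct, entirely explicit computation: using $\tfrac12|F_A|^2 = 3\dot{x}^2/t^2 + 12 x^2/\nu^4 + 12x^2(x-1)^2/t^4$ together with the explicit formulas $x_{y_0}=y_0 t^2/(1+2y_0\!\int_0^r t)$ and $x_{\lim}=t^2/(2\!\int_0^r t)$, the density difference is written as a rational function of $\nu^2$, integrability is checked by inspection of the degrees, and the limit is evaluated term by term (only the $n=2$ term survives and gives $4\nu_0^4\mathrm{Vol}^*=8\pi^2\mathrm{Vol}(Z)$). Your route instead exploits the $\Spin(7)$-instanton identity $|F_A|^2\dvol=\tr(F_A\wedge F_A)\wedge\Theta$ to convert the total mass into a Chern--Simons boundary flux, identifies the inner boundary term via the degree-$1$ gauge transformation relating the trivializations of $\pi^*E$ and $\underline{\mathbb{C}}^2$ near $Z$, and then localizes the constant mass by the weak-$*$ argument plus the cohomogeneity-one Fubini decomposition. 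This explains \emph{why} $8\pi^2$ appears (it is $\mathrm{CS}$ of a degree-one map) rather than observing it emerge from an integral, and it cleanly separates the topological content from the analysis.

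That said, two of your asserted estimates are the real content and are not as soft as you present them. The decay $\big||F_{A_{y_0}}|^2-|F_{A_{\lim}}|^2\big|=O(r^{-8-\eta})$ is \emph{not} a Malgrange-type estimate (Malgrange's theorem governs the singular IVP at $r=0$, not asymptotics at the conical end). Indeed the naive estimate from $x_{y_0}-x_{\lim}\sim -1/(y_0 r^2)$ and $|F|\sim 1/r^2$ only gives $O(r^{-6}/y_0)$, which is \emph{not} integrable against $\dvol\sim r^7\,dr$; integrability rests on a specific algebraic cancellation at $(x,s)=(3/5,1/5)$ (where $s=t^2/\nu^2$) in the density $\tfrac{12 x^2}{t^4}\bigl[2(x-1)^2+4(x-1)s+6s^2\bigr]$, after which the genuine decay is $O(r^{-28/3})$ driven by the $(\nu_0/\nu)^{10/3}$ correction to the cone. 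The same cancellation is what makes your $r=R$ boundary flux $\int_{\{r=R\}}\mathrm{cs}\wedge\Theta$ convergent (again, naively it looks like $O(R^2/y_0)$). Similarly, the statement that the inner boundary flux through the collapsing $S^3$-bundle tends to $8\pi^2\mathrm{Vol}(Z)$ is correct in spirit but needs to be formulated carefully: $\mathrm{cs}(A_{\lim},A_{y_0})$ is a globally defined $3$-form on $E\setminus Z$ (independent of trivialization), and what you actually need is that its flux through $\{r=\epsilon\}$ against $\Theta$ limits to $-8\pi^2\mathrm{Vol}(Z)$, which follows from the fact that $p_{A_{y_0}}$ and $p_{A_{\lim}}$ are both smooth across $Z$ together with the degree-$\pm1$ normal behaviour of the gauge in which $A_{\lim}$ extends; simply invoking ``$\pm8\pi^2$ from a degree-$1$ gauge transformation'' without tracking the $\Theta|_{\{r=\epsilon\}}\to\pi^*\dvol_Z$ collapse leaves the constant unjustified. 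In short: the strategy is sound and genuinely different from the paper's, but the two ``quantitative'' steps you flag as obstacles are not merely technical --- one of them (the decay) rests on a non-obvious cancellation that, without the Chern--Weil structure, one would only see by doing the paper's explicit computation.
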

We first note that the coefficient of $8\pi^2$ on the right is exactly the total energy of the charge-$1$ instanton on $\mathbb{R}^4$. Next, we observe that this result is similar in spirit to the discussion of Tian \cites{Tian2000} on the compactification of the moduli space; however, that result does not directly apply in this case as the total energy is not finite.\

There are many obvious similarities between the results of this paper and those of a previous work of the second author with Jason Lotay \cites{Lotay2018} that considers gauge theory on certain non-compact $G_2$-manifolds. The principal difference is that here we do not presuppose any homogeneity properties of the transverse slice $X$. That hypothesis is made redundent in this case by the $3$-Sasakian condition, which furnishes the ingredients for our construction. Also, there already exist a large number of cohomogeneity-$1$ $G_2$ manifolds, thereby motivating a more detailed treatment according to their isotropy groups. Complete $\Spin(7)$ manifolds are more sparce, and our treatment, as far as we know, already describes all known examples of asymptotically conical $\Spin(7)$-manifolds.

\subsubsection*{Acknowledgments}

The second named author wants to thank Alex Waldron and Thomas Walpuski for their comments.

\section{Prerequisites from $\Spin(7)$ geometry}

\subsection{$\Spin(7)$-manifolds}
The standard definition of the group $\Spin(7)$ is as the universal (double) cover of $\SO(7)$. However, via the spin representation, we can consider it as a subgroup $\Spin(7) \subset \SO(8)$. By the Berger-Simons classification of Riemannian holonomy groups, $\Spin(7)$, with this representation, is one of the candidates as an exceptional holonomy group. In fact, by a theorem of Fernandez \cites{Fernandez86} the reduction of holonomy group to $\Spin(7)$ is equivalent to the existence of a closed, $4$-form $\Theta$ such that at each $p\in M$, $\Theta_p$ takes values in a certain closed submanifold of $\Lambda^4T_p^*M$. This is determined by the condition that there exists an isomorphism between $T_pM$ and $\mathbb{R}^8$ identifying $\Theta_p$ with the $4$-form\footnote{A perhaps more transparent expression for this form is as $\Theta_0=dx^0\wedge \varphi_0+\psi_0$ where $\varphi_0$ and $\psi_0 $ are the fundamental $3$ and $4$ forms on $\mathbb{R}^7$ that are invariant with respect to the subgroup $\G_2\subseteq \Spin(7)$. }
\begin{eqnarray*}
	\Theta_0 &=& dx^{0123}- dx^{0145}-dx^{0167} -dx^{0246}-dx^{0275}-dx^{0347}-dx^{0356}\\
	&&\ \ \ \ +dx^{4567} -dx^{1247}-dx^{1256}-dx^{2345}-dx^{2367}-dx^{3146}-dx^{3175}.
\end{eqnarray*} 
The condition that at all points $\Theta$ takes values in this closed submanifold yields a reduction of the principal frame bundle to have structure group $\Spin(7)={\rm Stab(\Theta_p)}$. For a given Riemannian metric, the condition that the holonomy group be contained in $\Spin(7)$ is equivalent to the existence of some $4$-form $\Theta$, as above, for which the reduced principal bundle is preserved by the Levi-Civit\`a connection of the metric. The theorem of Fern\'andez says that this in turn is equivalent to such a $\Theta$ existing, but only with the condition $d\Theta=0$.
This is usually summarized by saying that the $\Spin(7)$-structure determined by $\Theta$ is {\sl torsion-free}.

Having the previous discussion in mind, we shall regard a $\Spin(7)$-manifold to be a triple $(M,g,\Theta)$ as in the previous paragraph. In such a manifold, the reduction of structure group to $\Spin(7)$ determines decompositions of the bundles $\Lambda^kT^* M$ into sub-bundles corresponding to irreducible representations of $\Spin(7)$ on $\Lambda^k(\mathbb{R}^8)^*$. In particular, $\Lambda^2=\Lambda^2_7 \oplus \Lambda^2_{21}$ where $\Lambda^2_7 \cong \mathbb{R}^7$ is the standard representation of $\Spin(7)$ via $\SO(7)$, and $\Lambda^2_{21}\cong \mathfrak{spin}(7)\subseteq \mathfrak{so}(8)\cong \Lambda^2$ with the adjoint representation of $\Spin(7)$ on its Lie algebra $\mathfrak{spin}(7)$.\\

\subsection{$\Spin(7)$-instantons}
The gauge theoretic conditions that we consider are the following. Let $\G$ be a compact connected Lie group and $P\to M$ a principal $\G$-bundle on the $\Spin(7)$-manifold $M$. The curvature, $F_A$, of a connection $A$ on $P$ is a section of the vector bundle $\Lambda^2\otimes \mathfrak{g}_P$. We say that $A$ is a $\Spin(7)$ instanton if at each point $F_A$ takes values in the sub-bundle $\Lambda^2_{21}\otimes \mathfrak{g}_P$. This is equivalent to $A$ satisfying the equation
\begin{eqnarray}\label{eq:Spin7Inst_2}
*F_A=-F_A\wedge \Theta.
\end{eqnarray}
As remarked in the introduction, this equation has similarities with $4$ dimensional anti-self-dual equations. $\Spin(7)$-instantons are example of Yang-Mills connections which are the critical points of the Yang-Mills function
$$\cE(A)= \int_{M} |F_A|^2 \dvol ,$$
with respect to compactly supported variations. This makes little sense in our case as the examples that we construct have infinite energy,
 but even in this case the Yang-Mills equation $d_A \ast F_A=0$ can be considered. Using \ref{eq:Spin7Inst_2} we immediately see that $\Spin(7)$-instantons are Yang-Mills as
$$d_A \ast F_A = - d_A F_A \wedge \Theta - F_A \wedge d \Theta =0,$$
by the Bianchi identity $d_AF_A=0$ and the fact that $\Theta$ is closed.

\section{AC fillings of squashed $3$-Sasakians}\label{sec:Structure}

On a $7$-dimensional spin manifold $X^7$ consider a $\G_2$-structure whose associated $3$ and $4$-forms are $\varphi,\psi=*_\varphi\varphi$. Then, on $(0,\infty)_r\times X$, the $4$-form $\Theta=r^3dr\wedge\varphi+r^4\psi$ determines a reduction of the structure group to $\Spin(7)$ whose associated metric is the cone over $g_\varphi$. If the metric on $X$ is {\sl $3$-Sasakian}, then the cone metric is hyperK\"ahler, and has holonomy group contained in $\Sp(2)\subseteq \Spin(7)$. However, in the case where $X$ supports a $3$-Sasakian structure, there exists another metric obtained by shrinking three of the seven dimensions by a common factor such that the new cone metric has holonomy equal to $\Spin(7)$. We shall now review this construction, use it to re-derive the Bryant-Salamon $\Spin(7)$-manifold in a way compatible with writing the family of connections we are interested in studying.

\subsection*{Preparation from $3$-Sasakian geometry}

A $3$-Sasakian $7$-manifold is a Riemannian $7$-manifold $(X^7, g_7)$ equipped with $3$-orthonormal Killing vector fields $\lbrace \xi_i \rbrace_{i=1}^3$ satisfying $[\xi_i, \xi_j] = \epsilon_{ijk} \xi_k$. Any $3$-Sasakian $X$ is quasi-regular in the sense that the vector fields $\lbrace \xi_i \rbrace_{i=1}^3$ generate a locally free $\SU(2)$ action. The space of leaves $Z^4$ comes equipped with the Riemannian metric $g_Z$ such that $\pi:X \rightarrow Z$ is an orbifold Riemannian submersion. With respect to the metric $g_Z$, $Z$ has the structure of an anti-self-dual Einstein orbifold with scalar curvature $s>0$. If $Z$ is spin, then we can regard $\pi: X\rightarrow Z$ as the lift to $\SU(2)$ of an $\SO(3)$, (orbi)-bundle of frames of $\Lambda^2_+ Z$. The Levi-Civita connection of $Z$ equips it with a connection $\eta=\eta_i \otimes T_i \in \Omega^1(X^7, \mathfrak{su}(2))$, where the $T_i$ are a standard basis of $\mathfrak{su}(2)$ satisfying $[T_i,T_j]=2\epsilon_{ijk}T_k$. This has the property that the $\eta$-horizontal forms $\omega_i$ defined by
$$F_{\eta}= d\eta + \frac{1}{2}[\eta \wedge \eta] = - \frac{s}{24} \  \omega_i \otimes T_i,$$
form an orthogonal basis of $(\Lambda^2_+ \ker (\eta), g_7 \vert_{\ker (\eta)})$ with $\vert \omega_i \vert = \sqrt{2}$ and $s \in \mathbb{R}^+$. We further remark that the $3$-Sasakian metric $g_7$ can be written as
\begin{eqnarray*}
g^{ts} = \eta^i \otimes \eta^i + \pi^* g_Z ,
\end{eqnarray*}
while squashing the fibers of $X \rightarrow Z$ by a factor of $\sqrt{5}$ we obtain the metric
\begin{eqnarray*}
	g^{np}=\frac{1}{5}\sum\eta^i\otimes \eta^i +\pi^*g_Z
\end{eqnarray*}
This is obtained from the $\G_2$-structure determined by the $3$-form 
\begin{eqnarray*}
	\varphi^{np} =5^{-3/2}\eta^{123}-5^{-1/2}(\eta^1\wedge \omega^1+\eta^2\wedge \omega^2+\eta^3\wedge \omega^3),
\end{eqnarray*}
is also Einstein, and the metric cone over $g^{np}$ has holonomy equal to $\Spin(7)$. Such metrics are known as nearly parallel and the above construction was first discovered in \cites{FKMS1997}. We refer to the survey article \cites{Boyer2001} and references therein for more information on $3$-Sasaki geometry.

\subsection*{Evolution equations}

Considering $X$ as a principal $\SU(2)$-bundle over $Z$, we can construct the associated (orbi-)vector bundle $E=X\times_{\SU(2)} \mathbb{C}^2$ over $Z$. Away from the singular locus of $Z$, $E$ is a smooth vector bundle, with $Z$ smoothly embedded in $E$ as the zero section. Moreover, $E\setminus Z$ is diffeomorphic to the product $X\times (0,\infty)$. Typically in this work, we will suppose that $E$ is a smooth $8$-manifold, although how the $\Spin(7)$ structures and $\Spin(7)$ instantons degenerate on the singular strata could also lead to interesting questions.\\
For $t, \nu \in \mathbb{R}^+$, we consider $X$ equipped with the $\G_2$-structure $\varphi$
$$\varphi = t^3 \eta_1 \wedge \eta_2 \wedge \eta_3 - t \nu^2 \frac{s}{48} \left( \eta_1 \wedge \omega_1 + \eta_2 \wedge \omega_2 + \eta_3 \wedge \omega_3 \right),$$
which determines the Riemannian metric $g_{\varphi}=t^2 \eta_i \otimes \eta_i + \frac{s\nu^2}{48} \pi^* g_Z$, the orientation $-\omega_1^2 \wedge \eta_{123}$ and associated $4$-form
$$\psi = \frac{\nu^4}{6} \left(\frac{s}{48}\right)^2 \omega_i \wedge \omega_i - \nu^2 t^2 \frac{s}{48} \left( \eta_1 \wedge \eta_2 \wedge \omega_3 + \eta_2 \wedge \eta_3 \wedge \omega_1 + \eta_3 \wedge \eta_1 \wedge \omega_2 \right).$$
Now we let $t, \nu$ be functions of $r$ and consider the $\Spin(7)$-structure
\begin{equation}\label{eq:Spin(7)_Structure}
\Theta=dr \wedge \varphi + \psi ,
\end{equation}
on $\mathbb{R}^+ \times X$. We then have
\begin{eqnarray*}
g_\Theta=dr^2+t(r)^2\sum_i\eta_i\otimes \eta_i +\nu(r)^2\pi^*g_Z,
\end{eqnarray*}
and we note that, for any $x\in X$, the curve $r\mapsto (r,x)\in (0,\infty)\times X$ is a geodesic for this metric.
The metric has holonomy in $\Spin(7)$ if and only if $\Theta$ is closed, i.e.
$$\frac{\partial \psi}{\partial r}=  d\varphi.$$
This on the other hand turns into the ordinary differential equations
\begin{eqnarray}\label{eq:ode_nu}
\frac{\partial \nu^2}{\partial r} & = &  6 t \\ \label{eq:ode_t}
\frac{\partial t}{\partial r} & = & 1-\frac{2 t^2}{\nu^2}.
\end{eqnarray}

\subsection*{The solutions} We start with the following example corresponding to the conical $\Spin(7)$-structure that exists on the cone over any nearly-parallel $\G_2$-structure.

\begin{example}\label{ex:cone}
The cone over the strictly nearly parallel structure obtained by squashing the $3$-Sasakian one is obtained by setting $t^2=c_1^2 r^2$ and $\nu^2=c_2^2 r^2$ for some constants $c_1 , c_2$. Inserting this into the equations we get that $c_1=3/5$ and $c_2 = 3/ \sqrt{5}$.\\
For this solution we can write $\varphi=r^3 \varphi_{np}$, and $\psi=r^4 \psi_{np}$ with the $\G_2$-structure $\varphi_{np}$ satisfying 
$$d \varphi_{np} = -4 \psi_{np}.$$
This is the (proper) nearly parallel structure of \cites{FKMS1997}, whose associated metric $g_{\varphi_{np}}$ is (up to scaling) obtained by squashing the $\SU(2)$-fibers of the $3$-Saskian one $g_7$.
\end{example}


\noindent The solutions to the system above can be written as the integral curves of the $1$-form
\begin{equation}\label{eq:1_Form_Omega} 
\omega = (\nu^2-2t^2)d \nu - 3 t \nu dt ,
\end{equation}
in $\mathbb{R}^2_+$ with coordinates $(\nu,t)$. The $1$-form above is homogeneous and so can easily been integrated, so that its integral curves are given implicitly as level sets of $\nu^4(\nu^2-5t^2)^3$, a few of which are plotted in Figure \ref{fig:Countorplot}. If we take initial conditions $t(0)=0$, $\nu(0)=\nu_0>0$, then the curve $(t,\nu)$ is constrained to satisfy $\nu^4(\nu^2-5t^2)^3=\nu_0^{10}>0$. Therefore, $1-\frac{2t^2}{\nu^2}\geq 1-\frac{5t^2}{\nu^2}>0$ so $t$ and $\nu$ are both strictly increasing for $r>0$.

\begin{figure}[]
\centering
	\includegraphics[scale=0.4]{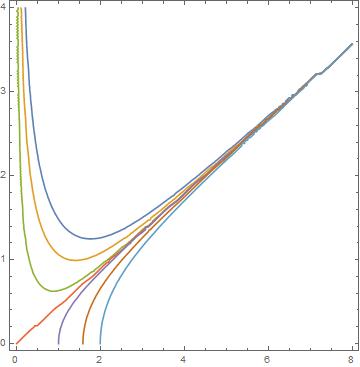}
\caption{\label{fig:Countorplot} Level sets of $\nu^4(\nu^2-5t^2)^3$, with $\nu$ the horizontal and $t$ the vertical axis.}
\end{figure}

\noindent It is immediate from the above that any solution curve asymptotes as $r \rightarrow + \infty$ to the line $\nu^2=5t^2$. In particular, this shows that the resulting structure is asymptotic, as $r \rightarrow + \infty$, to the conical one described in Example \ref{ex:cone}. It is also clear that those solution curves which start, when $r=0$ say, at $t=0$ actually compactify on a $\mathbb{R}^4$ (orbi)-bundle $E$ over $Z^4$, whose sphere bundle is $X^7$. Indeed, in this case, for $0\leq r \ll 1$ we have
\begin{eqnarray}\label{eq:Metric_Taylor_Expansion_nu}
\nu(r) & = & \nu_0 +\frac{3}{2 \nu_0} r^2 - \frac{13}{8 \nu_0^2} r^4 + \ldots \\ \label{eq:Metric_Taylor_Expansion_t}
t(r) & = & r - \frac{2}{3 \nu_0^2} r^3 + \ldots ,
\end{eqnarray}

where $\nu(0)^4=\nu_0^4 \in \mathbb{R}^+$ parametrizes the size of the zero section $Z \subset E$. Hence, the metric
\begin{eqnarray*}
g_{\Theta}= dr^2 + t^2 \sum_{i=1}^3 \eta_i \otimes \eta_i + \frac{s\nu^2}{48} \pi^* g_Z
\end{eqnarray*}
on $\mathbb{R}^+ \times X \cong E \backslash Z$, considered as an $\mathbb{R}^4\setminus\{0\}$-bundle over $Z$,   extends smoothly over the zero section. By the calculations that appear in the appendix, considering $\nu$ as an independent variable, the metric takes the form
\begin{eqnarray*}
g_{\Theta}=\frac{5}{9}\frac{1}{1-\left(\nu_0\nu^{-1}\right)^{10/3}}d\nu^2 +\frac{1}{5}\frac{1}{1-\left(\nu_0\nu^{-1}\right)^{10/3}}\nu^2\sum_i\eta_i\otimes \eta_i +\frac{s\nu^2}{48} \pi^* g_Z.
\end{eqnarray*}
The remaining solutions are either the conical one, in Example \ref{ex:cone}, and other incomplete $\Spin(7)$-metrics on the complement of the zero section in $Z$. Consider the connection on $E$ induced by the Levi-Civita connection on the anti-self-dual, Einstein $Z$. These incomplete $\Spin(7)$-metrics have the horizontal directions collapsing as one approaches the zero section while the normal spheres are getting infinitely large in all directions. 

\section{$\Spin(7)$-instantons}

\subsection*{Evolution equations}

We shall write the connection on the complement of the zero section $Z$, i.e. on $\mathbb{R}_+ \times X$, in radial gauge $A=a(r)$, where $r \in \mathbb{R}_+$. We can then interpret $A$ as an evolving family of connections on $X$. The curvature of $A$ on $\mathbb{R}^+_r \times X$ and its curvature is
$$F_A = F_a + dr \wedge \frac{\partial a}{\partial r},$$
where $F_a= F_a (r)$ denotes the curvature of $a(r)$ as a connection on a bundle over $\lbrace r \rbrace \times X$.\\
Taking $\ast$ of Equation \ref{eq:Spin7Inst_2}, the $\Spin(7)$-instanton equation may be written as
\begin{equation}\label{eq:Spin(7)_Instanton}
\ast \left( F_A \wedge \Theta \right) = - F_A ,
\end{equation}
and having in mind the $\Spin(7)$-structure \ref{eq:Spin(7)_Structure} we compute $\ast \left( F_A \wedge \Theta \right) =  \ast_{g_{\varphi}} ( F_a \wedge \varphi + \partial_r a \wedge \psi ) + dr \wedge \ast_{g_{\varphi}} \left( F_a \wedge \psi \right) $. Hence, the $\Spin(7)$ instanton equations turn into
\begin{equation}\label{eq:Spin(7)_Instanton_Evolution_Equation}
\frac{\partial a}{\partial r} = - \ast_{g_{\varphi}} \left( F_a \wedge \psi \right).
\end{equation}
We now turn to the connections $a$ that we are going to consider. To this point we have considered the trivial principal $\SU(2)$-bundle on $E\setminus Z$. For great ease of calculation, from this point on we consider the (trivial) associated vector bundle $V$, associated to the standard representation of $\SU(2)$ on $\mathbb{C}^2$. Given the projection $\pi:E\setminus Z\to Z$ described above, $V$ can be identified as $V\cong \pi^*E$. This bundle extends across the submanifold $Z\subseteq E$ in several ways. In the sequel (see Corollary \ref{cor:Extending_Smoothly}) we will be interested in the extensions 
\begin{enumerate}
\item $E\times \mathbb{C}^2\to E$, (trivial bundle on $E$),
\item $\pi^*E\to E$. 
\end{enumerate}
Initially we consider connections on the bundle $V\to E\setminus Z$. Later we will study how they extend across $Z$.

Recall that $X \rightarrow Z$ is the principal $\SU(2)$-bundle so that $\Lambda^2_+$ is the associated vector bundle with respect to the standard representation of $\SU(2)$ on $\mathbb{R}^3$. Moreover, as $E = X \times_{\SU(2)} \mathbb{C}^2$, we have that $E \backslash Z \cong \mathbb{R}^+ \times X$. Hence, using a radial gauge, it is enough to write the connections $a(r)$ as $1$-forms on $X$ with values on $\mathfrak{so}(3)$. So we write
\begin{equation}\label{eq:Connection_Form}
a(r)= \sum_{i=1}^3 a_i(r) T_i \otimes \eta_i,
\end{equation}
and using the fact that $d \eta_i = - \frac{s}{24} \omega_i - 2 \eta_j \wedge \eta_k$ its curvature can be computed to be
\begin{eqnarray}
F_a = \sum_{i=1}^3 \left( - \frac{s}{24} a_i \omega_i + \epsilon_{ijk} (a_ja_k - a_i) \eta_{jk} \right) \otimes T_i .
\end{eqnarray}
To compute the $\Spin(7)$ equation in its form as an evolution equation \ref{eq:Spin(7)_Instanton_Evolution_Equation} requires
\begin{eqnarray}\nonumber
\ast_{g_{\varphi}} (F_a \wedge \psi) & = &   \left( \frac{s }{48} \right)^2 \left( 2 \nu^2 t^2 a_1 + \nu^4 (a_2 a_3 - a_1) \right) T_1 \otimes \ast_{g_{\varphi}} (\omega_1^2 \wedge \eta_{23}) + \ldots \\ \nonumber
& = & \frac{2}{t} \left(  a_2 a_3 - \left( 1-\frac{2t^2}{\nu^2} \right) a_1 \right) T_1 \otimes\eta_1 + \ldots ,
\end{eqnarray}
where the dots stand for cyclic permutations of $(1,2,3)$ and we used $\dvol_{\varphi}= \frac{1}{2} \left( \frac{s}{48} \right)^2 t^3 \nu^4 \eta^{123} \wedge \omega_1^2$. Hence $A= a(r)$ is a $\Spin(7)$-instanton if and only if
\begin{eqnarray}\nonumber
\frac{\partial a_1}{\partial r} = - \frac{2}{t}  \left(  a_2 a_3 - \left( 1-\frac{2t^2}{\nu^2} \right) a_1 \right),
\end{eqnarray}
together with the similar looking equations obtained from cyclic permutation of $(1,2,3)$. In particular, in the case when $a_1=a_2=a_3=:x$ the connections \ref{eq:Connection_Form} become
\begin{equation}\label{eq:Connection_Form_Symmetric}
a(r)= \sum_{i=1}^3 x(r) T_i \otimes \eta_i, \ \text{with curvature} \ F_a = \sum_{i=1}^3 x \left( \epsilon_{ijk} (x - 1) \eta_{jk} - \frac{s}{24} \omega_i  \right) \otimes T_i,
\end{equation}
and the equation for $A=a(r)$ to be a $\Spin(7)$-instanton turns into
\begin{eqnarray}\label{eq:ODE}
\frac{\partial x}{\partial r} = - \frac{2}{t}  x \left(  x - \left( 1 - \frac{2t^2}{\nu^2} \right)  \right).
\end{eqnarray}

\begin{example}\label{ex:cone_Connection}
	In the case of the conical metric on $E \backslash Z$, i.e. when $t=3r/5$ and $\nu = 3r / \sqrt{5}$, we have $t^2 / \nu^2 = 1/5$ so it is easy to see that $x=0$ or $x=3/5$ is a solution. The connection with $a=0$ is the trivial flat connection and the connection with $x=3/5$ is the radial extension of the connection
	$$a_{\infty}= \frac{3}{5} \sum_{i=1}^3 \eta_i \otimes T_i , $$
	on the trivial $\mathbb{C}^2$-bundle over $X$. In fact, it is easy to see, using the static version of the evolution equation \ref{eq:Spin(7)_Instanton_Evolution_Equation}, that $a_{\infty}$ is actually a $G_2$-instanton for the nearly parallel $G_2$-structure of \cites{FKMS1997}{}, existing in any squashed $3$-Sasakian. For future reference we shall refer to $a_{\infty}$ as the \emph{canonical $\G_2$-instanton} of the nearly parallel $X$.
\end{example}

The rest of this paper is devoted to construct all solutions to the equations \ref{eq:ODE} which give rise to smooth $\Spin(7)$-instantons for the complete $\Spin(7)$-holonomy metric on $E$ obtained from $\nu(0)=\nu_0>0$.

\section{Solutions from ODE analysis}

In this section we prove a classification result for $\Spin(7)$-instantons for $\Theta$ on $E$ which arise from the evolution equation \ref{eq:ODE} from before. We start by settling the following.

\begin{lemma}\label{lem:Bounds}
\begin{itemize}
\item If $x > \left( 1-\frac{2t^2}{\nu^2} \right)$, then $x$ is decreasing.
\item If $ \left( 1-\frac{2t^2}{\nu^2} \right) >  x >  0$, then $x$ is increasing.
\item If $x(r_0)=0$, for some $r_0 >0$, then $x(r)=0$ for all $r \geq 0$. In particular if $x$ is positive somewhere, then it always remain positive.
\end{itemize}
\end{lemma}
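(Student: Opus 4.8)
The plan is to read \ref{eq:ODE} as a scalar ODE on the region $\{r>0\}$, where $t(r)>0$ and the right-hand side is regular, and to feed in one global fact from Section~\ref{sec:Structure}: along the complete solutions under consideration the pair $(t,\nu)$ satisfies $\nu^4(\nu^2-5t^2)^3=\nu_0^{10}>0$, hence $\nu^2>5t^2$ and
$$1-\frac{2t^2}{\nu^2}\ \geq\ 1-\frac{5t^2}{\nu^2}\ >\ 0\qquad\text{for all }r\geq 0.$$
Set $f(r):=1-\frac{2t^2}{\nu^2}$, which by \ref{eq:ode_t} equals $\partial_r t$; in particular $f>0$. With this notation \ref{eq:ODE} reads $\partial_r x=-\frac{2}{t}\,x\,(x-f)$ with $t>0$ and $f>0$.

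The first two bullets are then a direct sign computation. If $x(r)>f(r)$, then $x-f>0$ and also $x>f>0$, whence $x(x-f)>0$ and $\partial_r x=-\frac{2}{t}x(x-f)<0$; here the positivity of $f$ is what forces $x>0$ and makes the conclusion correct, so it is genuinely used, not merely formal. If instead $f(r)>x(r)>0$, then $x>0$ and $x-f<0$, so $x(x-f)<0$ and $\partial_r x>0$. This gives, respectively, that $x$ is strictly decreasing wherever $x>f$ and strictly increasing wherever $0<x<f$.

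For the third bullet I would invoke uniqueness for ODEs. The constant function $x\equiv 0$ solves \ref{eq:ODE}, since its right-hand side vanishes identically when $x=0$; and on $(0,\infty)$ the map $(r,x)\mapsto -\frac{2}{t(r)}x(x-f(r))$ is smooth, hence locally Lipschitz in $x$ (as $t>0$ there). By the Picard--Lindel\"of theorem a solution with $x(r_0)=0$ for some $r_0>0$ must agree with $x\equiv 0$ on all of $(0,\infty)$, and then $x(0)=0$ by continuity. The final assertion follows: if $x(r_1)>0$ for some $r_1$, then $x$ cannot vanish anywhere on $(0,\infty)$, for otherwise it would be identically zero; being continuous and positive somewhere, it is therefore positive on all of $(0,\infty)$ by the intermediate value theorem.

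There is no serious obstacle here; the only points that need care are (i) that the sign analysis in the first bullet really relies on $f>0$, so one should cite the inequality $\nu^2>5t^2$ from Section~\ref{sec:Structure} rather than argue purely formally, and (ii) that the coefficient $2/t$ is singular at $r=0$, so the uniqueness argument behind the third bullet must be run on the open interval $(0,\infty)$ and then extended to $r=0$ by continuity.
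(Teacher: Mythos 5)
Your proof is correct and follows essentially the same path as the paper's: establish $\nu^2>5t^2$ (hence $1-2t^2/\nu^2>0$) from the conserved quantity $\nu^4(\nu^2-5t^2)^3$, then read the signs off the right-hand side of Equation~\eqref{eq:ODE} for the first two bullets, and invoke ODE uniqueness against the trivial solution $x\equiv 0$ for the third. The only difference is that you make explicit two small points the paper elides — that $f>0$ is genuinely needed to force $x>0$ in the first bullet, and that the Picard--Lindel\"of argument should be run on $(0,\infty)$ where $2/t$ is regular and then pushed to $r=0$ by continuity — which is a welcome sharpening rather than a departure.
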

\begin{proof}
Notice that for any of the complete $\Spin(7)$-structures we are considering $\nu^2-5t^2>0$ (see Figure \ref{fig:Countorplot}). Indeed, this quantity is positive at $0$ and cannot change sign as $\nu^2 = 5t^2$ is an integral curve of the $1$-form $\omega$ in Equation \ref{eq:1_Form_Omega}, and its integral curves cannot intersect in the positive quadrant. Hence, we also have that $1-2t^2/\nu^2>0$. Then, the claims in the 3 bullets follow easily from analysing the signs appearing in the right hand side of Equation \ref{eq:ODE}, and the standard existence and uniqueness theorem for ordinary differential equations.
\end{proof}

\begin{corollary}
If $x(r_0)>0$ for some $r_0\in(0,\infty)$, then the solution $x(r)$ is bounded and exists for all $r\in(0,\infty)$.
\end{corollary}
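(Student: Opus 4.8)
The plan is to combine the positivity and monotonicity statements of Lemma~\ref{lem:Bounds} with an elementary barrier estimate and the continuation theorem for ODEs. Write $q(r):=1-2t(r)^2/\nu(r)^2$, so that \eqref{eq:ODE} reads $x'=-\tfrac2t\,x(x-q)$, and recall from Section~\ref{sec:Structure} and the proof of Lemma~\ref{lem:Bounds} that $t$ is strictly increasing with $t>0$ for $r>0$ and that $0<q(r)<1$ there — the lower bound because $\nu^2-5t^2>0$, the upper because $t>0$. Let $(\alpha,\beta)\ni r_0$ be the maximal interval on which the solution $x$ exists. First, by the third bullet of Lemma~\ref{lem:Bounds}, $x(r)>0$ throughout $(\alpha,\beta)$, since a solution vanishing at an interior point vanishes identically, which is excluded by $x(r_0)>0$.

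The heart of the proof is the \emph{a priori} bound $x(r)\le M:=\max\{1,x(r_0)\}$ for $r\in[r_0,\beta)$, which I would establish by contradiction. If $x(\bar r)>M$ for some $\bar r\in(r_0,\beta)$, then since $x(r_0)\le M<x(\bar r)$ there is a largest $r_*\in[r_0,\bar r)$ with $x(r_*)=M$, and $x>M$ on $(r_*,\bar r]$. But $x(r_*)=M\ge 1>q(r_*)$, so the first bullet of Lemma~\ref{lem:Bounds} gives $x'(r_*)<0$, hence $x(r)<M$ for $r$ slightly larger than $r_*$ — contradicting $x>M$ on $(r_*,\bar r]$. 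Thus $0<x\le M$ on $[r_0,\beta)$. Since the coefficients $2/t$ and $q$ of \eqref{eq:ODE} are smooth and bounded on every $[r_0,R]$ with $R<\infty$ (using $t(r_0)>0$), a bounded solution cannot blow up at a finite $\beta$, so the continuation theorem forces $\beta=+\infty$; and $\alpha=0$ because in the situation of this section $x$ arises from an instanton extending smoothly across the zero section $Z\subset E$, hence is defined on a neighbourhood of $r=0$. On the compact interval $[0,r_0]$ the continuous function $x$ is bounded, so altogether $x$ is bounded on $(0,\infty)$ and exists there.

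I would also point out a cleaner, essentially self-contained route that makes the bound quantitative: \eqref{eq:ODE} is a Bernoulli equation, and the substitution $y=1/x$ linearizes it to $y'+\tfrac{2t'}{t}\,y=\tfrac2t$; using $t'=1-2t^2/\nu^2$ and $(\nu^2)'=6t$ from \eqref{eq:ode_t} and \eqref{eq:ode_nu}, the integrating factor $t^2$ yields $t^2y=\tfrac13\nu^2+\mathrm{const}$, i.e.
\[
x(r)=\frac{3\,t(r)^2}{\nu(r)^2+c}
\]
for a constant $c\in\mathbb{R}$ (together with the degenerate solution $x\equiv 0$). For solutions regular at $r=0$ one has $\nu(r)^2+c>0$ on all of $(0,\infty)$, and since $t^2/\nu^2<1/5$ while $\nu^2\to\infty$ this makes both the existence on $(0,\infty)$ and the boundedness (indeed $x\to 3/5$ as $r\to\infty$) immediate. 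The one delicate point in either approach is the singular endpoint $r=0$, where the coefficient $2/t$ degenerates: this is precisely where the corollary interfaces with the question of whether the instanton extends across $Z$, and it is controlled by the regularity normalization at $r=0$ (equivalently, by the sign of $\nu^2+c$). Everything else is the routine barrier estimate above, and I expect no further obstacle.
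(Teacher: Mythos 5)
Your forward argument ($r\ge r_0$) is correct and is the same in spirit as the paper's: the paper also uses Lemma~\ref{lem:Bounds} to trap $x$ between $\min\{\min_K(1-2t^2/\nu^2),x(r_0)\}$ and $\max\{\max_K(1-2t^2/\nu^2),x(r_0)\}$ on a compact interval and then invokes continuation; your barrier argument is simply a more careful writing of that funnel estimate. Your Bernoulli substitution is a genuinely nicer route — the closed form $x=3t^2/(\nu^2+c)$ (equivalently, in the paper's own later normalization, $x=\frac{y_0 t^2}{1+2y_0\int_0^r t}$) makes the conclusion transparent — and it in fact anticipates what the paper does explicitly in its Section 6.

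The delicate point you flag at $r=0$ is, however, not a cosmetic issue but a real one, and your own formula quantifies it: for $c<-\nu_0^2$ the solution $x=3t^2/(\nu^2+c)$ is positive on $(r_*,\infty)$, where $\nu^2(r_*)=-c>\nu_0^2$, but blows up as $r\searrow r_*>0$. Picking any $r_0>r_*$ gives $x(r_0)>0$ while the solution fails to exist on $(0,r_*]$, so the corollary \emph{as literally stated} is false for backward existence, and no appeal to Lemma~\ref{lem:Bounds} alone can repair it. The same overreach is present in the paper's own proof: the two-sided bound it quotes from Lemma~\ref{lem:Bounds} holds going forward, but going backward a solution with $x>1-2t^2/\nu^2$ is strictly increasing as $r$ decreases with no a priori ceiling, and the claimed $\max\{\max_K(1-2t^2/\nu^2),x(r_0)\}$ is violated (one can check this numerically on the level set $\nu_0=1$, $c=-2$). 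Your attempted fix — invoking the instanton's smooth extension across $Z$ — is circular here, since that is not a hypothesis of the corollary and is in fact decided precisely by the sign of $\nu^2+c$. What is true, and what both your barrier estimate and the paper's argument do establish, is global \emph{forward} existence and boundedness on $[r_0,\infty)$; this is all that is actually used downstream, since Theorem~\ref{thm:Solutions_ODE_Analaysis} constructs the solutions from a singular IVP at $r=0$ (so the maximal interval already contains $(0,r_0]$) and the corollary then guarantees forward completeness. It would be worth recording this distinction explicitly: either weaken the statement to forward existence, or add the hypothesis that the solution extends to $r=0$ (equivalently $c\ge-\nu_0^2$, equivalently $x(r_0)\le x_{\lim}(r_0)=\frac{t^2(r_0)}{2\int_0^{r_0}t}$).
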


\begin{proof}
For any other $r_1\in(0,\infty)$, on the compact interval $K=[r_0,r_1]$, by the previous lemma the values that $x(r)$ can take are necessarily bounded between
\begin{eqnarray*}
0<\min\left\{\min_K\left(1-\frac{2t^2}{\nu^2}\right),x(r_0)\right\} \leq x(r)\leq \max\left\{\max_K \left(1-\frac{2t^2}{\nu^2} \right),x(r_0)\right\}.
\end{eqnarray*}
Therefore, by a comparison with the equation $\frac{dx}{dt}=Mx$ for $M$ sufficiently large, we can conclude that the maximal interval of the solution $x$ includes $K$.
\end{proof}
We now wish to understand the behaviour of the connections for small values of $r$, and in particular which solutions extend smoothly over the zero section $Z=\{r=0\}$ in $E$. Secondly, we wish to determine their limiting behaviour along the conical end.  
 For the first of these points, we benefit from having the following criterion, which may
  be of independent interest given its practicality in the cohomogeneity-$1$ setting.

\begin{proposition}\label{prop:Extending_Smoothly}
Let $A$ be a $\Spin(7)$-instanton for the structure $\Theta$ on $E \backslash Z$ as above. Then, $A$ smoothly extends over $Z$ up to gauge if and only if its curvature remains bounded.
\end{proposition}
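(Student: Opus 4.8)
The plan is to deduce this from the removable singularity theorem of Tao--Tian \cites{Tao2004}, which in dimension $8$ states roughly that a Yang--Mills connection (in particular a $\Spin(7)$-instanton) on the complement of a codimension-$4$ submanifold $Z$ with locally finite energy and locally bounded curvature extends smoothly across $Z$ up to gauge. Since every $\Spin(7)$-instanton is Yang--Mills (as recalled in the excerpt), the ``if'' direction reduces to verifying the hypotheses of that theorem. The ``only if'' direction is immediate: if $A$ extends smoothly over $Z$ up to gauge, then in the extended gauge $F_A$ is a smooth section over a neighbourhood of the compact orbifold $Z$, hence bounded near $Z$, and boundedness of curvature is gauge-invariant.

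For the ``if'' direction, suppose the curvature of $A$ is bounded near $Z$. First I would localize: cover a tubular neighbourhood of $Z$ by finitely many charts of the form $U\times B^4$ where $B^4\subset\mathbb{R}^4$ is a ball transverse to $Z$, so that $Z$ corresponds to $U\times\{0\}$ and the radial coordinate $r$ of Section \ref{sec:Structure} is comparable to the $\mathbb{R}^4$-distance to the origin (this is legitimate since, by the Taylor expansions \eqref{eq:Metric_Taylor_Expansion_nu}--\eqref{eq:Metric_Taylor_Expansion_t}, the metric $g_\Theta$ extends smoothly across $Z$ and is uniformly equivalent there to a product metric). Next I would check the local finite-energy hypothesis: on a punctured neighbourhood $\{0<r<\epsilon\}$ one has $\int |F_A|^2\,\dvol_{g_\Theta}\leq (\sup|F_A|^2)\cdot\vol(\{r<\epsilon\})<\infty$ precisely because $|F_A|$ is bounded and the total volume of a bounded neighbourhood of the compact $Z$ is finite. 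With bounded curvature and locally finite energy in hand, the Tao--Tian theorem applies on each chart and produces a local gauge transformation in which $A$ extends as a smooth connection; one then patches these using the uniqueness (up to gauge) part of the removable-singularity statement, which guarantees the transition functions are smooth across $Z$, to obtain a smooth extension of the bundle-with-connection over all of $Z$.

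The main obstacle I anticipate is not any single estimate but the bookkeeping needed to invoke Tao--Tian cleanly in the \emph{orbifold} setting and to handle the patching: one must make sure that the codimension-$4$ ``singular set'' $Z$, which here is an Einstein $4$-orbifold rather than a smooth submanifold, still falls within the scope of the removable-singularity theorem (it does, since the statement is local and the relevant estimates are insensitive to the orbifold structure, or one may pass to local uniformizing covers), and that the local gauges can be glued to an honest bundle. A secondary point worth stating carefully is why ``smoothly extends over $Z$ up to gauge'' is the right conclusion rather than merely ``extends as a $W^{1,p}$ connection'': this is where one uses elliptic regularity for the Yang--Mills (equivalently $\Spin(7)$-instanton) equation in a good gauge, bootstrapping from the continuous extension to a smooth one. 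None of these steps requires the explicit ODE \eqref{eq:ODE}; the proposition is a structural consequence of the instanton equation plus the geometry of the filling described in Section \ref{sec:Structure}.
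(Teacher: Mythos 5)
Your proposal takes the same route as the paper: the easy direction via gauge-invariance of $|F_A|$, and Tao--Tian's removable singularity theorem for the converse. However, you verify the wrong hypothesis. Tao--Tian's theorem is not stated in terms of locally finite energy plus bounded curvature; its key assumption is smallness of the scale-invariant normalized energy, namely that $\limsup_{r\to 0} r^{4-n}\int_{B_r(x)}|F_A|^2$ be small (here $n=8$). Your check that $\int_{\{0<r<\eps\}}|F_A|^2<\infty$ establishes local finite energy, but that alone does \emph{not} give the Tao--Tian hypothesis: finite energy is compatible with the normalized quantity $r^{-4}\int_{B_r(x)}|F_A|^2$ blowing up as $r\to 0$. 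What actually does the job is the bound on $|F_A|$ itself: if $|F_A|\le C$ near $Z$ then $\int_{B_r(x)}|F_A|^2\le C^2\,\vol(B_r(x))=O(r^8)$, so $r^{-4}\int_{B_r(x)}|F_A|^2=O(r^4)\to 0$ for every $x\in Z$, which is exactly what the paper writes. This one-line scale-invariant computation is the substantive step connecting ``bounded curvature'' to the hypothesis Tao--Tian actually require, and it should be made explicit; your surrounding remarks about localization, the orbifold locus, patching local gauges, and elliptic bootstrapping are reasonable but ancillary.
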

\begin{proof}
From the gauge invariance of the norm of the curvature it follows that for a connection to smoothly extend over the zero section its curvature must remain bounded. Hence, this is certainly a necessary condition. For the converse we use the instanton condition and appeal to the removable singularity theorem of Tao and Tian \cites{Tao2004}. Indeed, if the curvature is bounded and $B_r(x)$ denotes a radius $r$ ball centred at a point $x \in Z$, then we certainly have that
$$\limsup_{r \rightarrow 0} r^{4-n} \int_{B_r(x)} |F_A|^2 =0,$$
where we note that here $n=8$. Thus, Tao-Tian's removable singularity theorem applies and $A$ smoothly extends over $Z$.
\end{proof}

We shall now use this to prove the following result.

\begin{corollary}\label{cor:Extending_Smoothly}
	Let $A$ be a $\Spin(7)$-instanton for $\Theta$ on $E \backslash Z$ which in radial gauge can be written as in Equation (\ref{eq:Connection_Form_Symmetric}). Suppose $A$ smoothly extends over $Z$, then either:
	\begin{itemize}
		\item $A|_Z$ is the connection on $E$ induced by the Levi-Civita connection of the anti-self-dual, Einstein metric on $Z$, in which case:
		$$x(r)=1 + O(r^2).$$
		\item $A|_Z$ is the flat connection on the trivial $\mathbb{C}^2$-bundle over $Z$, in which case:
		$$x(r)=O(r^2)$$
	\end{itemize}
\end{corollary}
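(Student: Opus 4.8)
The plan is to combine the removable‑singularity criterion of Proposition~\ref{prop:Extending_Smoothly} with an asymptotic analysis of the singular ODE \eqref{eq:ODE} near $r=0$. First I would compute $|F_A|^2$ with respect to $g_\Theta$, starting from the explicit curvature in \eqref{eq:Connection_Form_Symmetric} together with the radial term $dr\wedge\partial_r a=\sum_i\dot x\,T_i\otimes(dr\wedge\eta_i)$. Since $g_\Theta=dr^2+t^2\sum_i\eta_i\otimes\eta_i+\tfrac{s\nu^2}{48}\pi^*g_Z$ with the three summands mutually orthogonal, we have $|\eta_i|_{g_\Theta}=t^{-1}$, $|dr\wedge\eta_i|_{g_\Theta}=t^{-1}$, $|\eta_{jk}|_{g_\Theta}=t^{-2}$, while $|\omega_i|_{g_\Theta}$ is a fixed positive multiple of $\nu^{-2}$; moreover the three types $\eta_{jk}$, $\omega_i$, $dr\wedge\eta_i$ are pairwise orthogonal, so
$$|F_A|^2=c_1\,\frac{x^2(x-1)^2}{t^4}+c_2\,\frac{x^2}{\nu^4}+c_3\,\frac{\dot x^{\,2}}{t^2},$$
with $c_1,c_2,c_3>0$ depending only on $s$ and the normalisation of the $T_i$. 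Inserting the Taylor expansions \eqref{eq:Metric_Taylor_Expansion_nu}--\eqref{eq:Metric_Taylor_Expansion_t}, i.e.\ $t=r+O(r^3)$ and $\nu=\nu_0+O(r^2)$, boundedness of $|F_A|$ as $r\to 0$ is equivalent to $x(x-1)=O(r^2)$ and $\dot x=O(r)$. In particular $\dot x$ is integrable at $0$, so $x_0:=\lim_{r\to 0}x(r)$ exists and satisfies $x_0(x_0-1)=0$, hence $x_0\in\{0,1\}$. Since $A$ is assumed to extend smoothly over $Z$, Proposition~\ref{prop:Extending_Smoothly} makes this the relevant dichotomy.

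Next I would upgrade ``$x\to x_0$'' to the claimed $O(r^2)$ rate by bootstrapping \eqref{eq:ODE}, rewritten using $\tfrac1t=\tfrac1r(1+O(r^2))$ and $\tfrac{2t^2}{\nu^2}=\tfrac{2r^2}{\nu_0^2}(1+O(r^2))$. If $x_0=1$, set $u=x-1\to 0$; then
$$\dot u+\frac2r\,u=-\frac{4r}{\nu_0^2}+O(ru)+O\!\left(\tfrac{u^2}{r}\right)+O(r^3),$$
so $(r^2u)'=-\tfrac{4r^3}{\nu_0^2}+O(r^3u)+O(ru^2)+O(r^5)$; integrating from $0$ (the boundary term $\lim_{r\to0}r^2u$ vanishes) and absorbing the quadratic contribution, which is small once $|u|$ is, gives $u(r)=-\tfrac{r^2}{\nu_0^2}+o(r^2)$, hence $x=1+O(r^2)$. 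If $x_0=0$, set $u=x\to 0$; now the linearisation has the opposite sign, $\dot u=\tfrac2r\,u\bigl(1-u+O(r^2)\bigr)$, so on a small interval $(0,\delta]$ where $|u|\le\tfrac14$ one gets $\tfrac{d}{dr}\log|u|=\tfrac2r(1+\varepsilon(r))$ with $|\varepsilon|\le\tfrac12$, forcing $|u(r)|\le Cr$; feeding $u=O(r)$ back in yields $\varepsilon(r)=O(r)$, hence $\tfrac{d}{dr}\log|u|=\tfrac2r+O(1)$ and $|u(r)|\asymp r^2$, i.e.\ $x=O(r^2)$.

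It then remains to identify $A|_Z$. When $x_0=1$ we have $a(r)=\sum_i x(r)\,T_i\otimes\eta_i\to\sum_i T_i\otimes\eta_i=\eta$, the connection induced on $E=X\times_{\SU(2)}\mathbb{C}^2$ by the Levi--Civita connection of the anti‑self‑dual Einstein metric on $Z$; consistently, putting $x=1$ in \eqref{eq:Connection_Form_Symmetric} gives $F_a|_{r=0}=-\tfrac{s}{24}\sum_i\omega_i\otimes T_i=F_\eta$. When $x_0=0$ we have $a(r)\to 0$ and $F_a\to 0$, so $A|_Z$ is the flat connection on $\mathbb{C}^2\times Z$. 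I would also remark that the free constant $C$ appearing in the case $x_0=0$ is consistent with the one‑parameter family in alternative (2), whereas the integrating‑factor argument in the case $x_0=1$ pins down the solution, consistent with uniqueness there.

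The main obstacle is that \eqref{eq:ODE} is genuinely singular at $r=0$ (coefficient $\sim 2/r$), so the asymptotics do not come for free from standard ODE theory. The delicate case is $x_0=0$: there the linearised equation has homogeneous solutions vanishing like $r^2$, so a whole one‑parameter family of solutions tends to $0$, and one cannot pin the solution down by integrating an integrating‑factor identity down from $r=0$; instead one must run the two‑step logarithmic bootstrap above using only the a priori input $x\to 0$ supplied by Proposition~\ref{prop:Extending_Smoothly}. One must also check that the $O(\cdot)$ error terms, which come from the Taylor tails of $t$ and $\nu$, are uniform on a fixed small interval, which is routine given \eqref{eq:Metric_Taylor_Expansion_nu}--\eqref{eq:Metric_Taylor_Expansion_t}.
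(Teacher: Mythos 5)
Your proposal is correct and follows essentially the same approach as the paper: compute $|F_A|^2$ from Equation \eqref{eq:Connection_Form_Symmetric}, invoke Proposition~\ref{prop:Extending_Smoothly}, and read off the dichotomy from the Taylor expansions of $t$ and $\nu$. The ODE bootstrap in your second paragraph is superfluous, though: once boundedness of \eqref{eq:Norm_curvature} has given you $x(x-1)=O(r^2)$ and $x\to x_0\in\{0,1\}$, the claimed rates follow immediately by dividing $x(x-1)$ by whichever factor stays bounded away from zero ($x-1$ when $x_0=0$, $x$ when $x_0=1$), which is precisely the step the paper takes.
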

\begin{proof}
Appealing to Proposition \ref{prop:Extending_Smoothly} we shall simply impose the condition that the curvature remains bounded. For this, and using the inner product $\langle A , B \rangle =-\tr(AB)$ on $\mathfrak{su}(2)$, we compute
\begin{eqnarray}\nonumber
\frac{1}{2} | F_A |^2 & = & \frac{1}{2} \left( | \dot{a} |^2 + | F_a |^2 \right) \\ \label{eq:Norm_curvature}
& = &3 \frac{( \partial_r x )^2}{t^2} + 12 \frac{x^2}{\nu^4} + 12 \frac{x^2(x-1)^2}{t^4} ,
\end{eqnarray}
and recalling that for $r \ll 1$ we have $t(r) = r + O(r^3)$ while $\nu(r) = \nu_0+O(r^2)$, we conclude that either:
\begin{itemize}
\item $x(r)=1 + O(r^2)$, or
\item $x(r)=O(r^2)$.
\end{itemize}
These initial conditions can be geometrically interpreted as follows. The connections $A$ considered above live on the bundle $\pi^* E$, where $\pi: E \backslash Z \rightarrow Z$. This is trivial as an $\mathbb{C}^2$-bundle and extends as a bundle over the whole of $E$ by gluing it with either $E$ or $Z \times \mathbb{C}^2$ over the zero section $Z$. In particular, the connection 
$$a(0):= \lim_{r \rightarrow 0} a(r)$$ 
induces a connection either on $E\to Z$ or $Z \times \mathbb{C}^2\to Z$. For connections satisfying the first case above, we have 
$$a(0)= \sum_{i=1}^3 \eta_i \otimes T_i,$$ 
which is the Levi-Civita connection on $X$ associated with the anti-self-dual Einstein metric in $Z$. This defines a connection, on $E\to Z$, so $A$ extends across $Z$ to give a connection on $\pi^*E\to E$. On the other hand, for the second case above we have $a(0)=0$ and so it is a connection on the trivial bundle $Z \times \mathbb{C}^2$ and $A$ extends to give a connection on $E\times \mathbb{C}^2$. In summary, the initial conditions above determine whether $A$ extends over the zero section as a connection on $E$, or on the trivial bundle over $Z$. 
\end{proof}
 
We now analyze the local existence around the zero section of solutions satisfying either of these (singular) initial value problems.

\begin{theorem}\label{thm:Solutions_ODE_Analaysis}
Let $\Theta$ be the torsion free $\Spin(7)$-structure from Section \ref{sec:Structure} and $A$ a $\Spin(7)$-instanton for $\Theta$ which in radial gauge may be written as \ref{eq:Connection_Form_Symmetric}. Then $A$ is asymptotic as $r \rightarrow + \infty$ to the canonical $\G_2$-instanton $a_{\infty}$ of the nearly parallel $X^7$, and either:
\begin{itemize}
	\item $A|_Z$ is the connection on $E$ induced by the Levi-Civita connection of the anti-self-dual, Einstein metric on $Z$. In this case there is a unique $\Spin(7)$-instanton $A_{\lim}$ on $\pi^*E \rightarrow E$ with this property.
	\item $A|_Z$ is the flat connection on the trivial $\mathbb{C}^2$-bundle over $Z$, in which case there is a real $1$-parameter family of such $\Spin(7)$-instantons with this property. These live on the trivial bundle $E \times \mathbb{C}^2$, are parametrized by $y_0 \in [0, + \infty)$ and can be written as
	$$A_{y_0}=r^2 {y}(r) \sum_{i=1}^3 \eta_i \otimes T_i,$$
	for some real analytic function ${y}$ of $r$, with $\tilde{y}(0)=y_0 \geq 0$. For $y_0<0$ we obtain $\Spin(7)$-instantons which are only locally defined in a neighbourhood of $Z$.
\end{itemize}
\end{theorem}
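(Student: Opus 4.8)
\emph{Strategy.} The only unknown in \eqref{eq:ODE} is the scalar profile $x(r)$, and I would analyse it in two regimes. At the conical end ($r\to\infty$) I would pin down $\lim x$ from the monotonicity already present in Lemma~\ref{lem:Bounds}; at the zero section ($r\to 0$) I would turn each of the two alternatives of Corollary~\ref{cor:Extending_Smoothly} into a singular initial value problem for \eqref{eq:ODE}, solve it with Malgrange's theorem, and continue the solution to all $r$ using Lemma~\ref{lem:Bounds} and the global-existence corollary that follows it. Set $c(r)=1-2t^2/\nu^2$, the coefficient appearing in \eqref{eq:ODE}. From \eqref{eq:ode_nu}--\eqref{eq:ode_t} one computes $\frac{d}{dr}(t^2/\nu^2)=\frac{2t}{\nu^2}\bigl(1-5t^2/\nu^2\bigr)$, which is positive because $\nu^2-5t^2>0$ on every complete solution (this was used already in the proof of Lemma~\ref{lem:Bounds}); hence $c$ decreases strictly from $c(0)=1$ to $\lim_{r\to\infty}c=3/5$. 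For a solution with $x$ positive somewhere --- hence everywhere, and bounded and global, by Lemma~\ref{lem:Bounds} and its corollary --- I claim $x(r)\to 3/5$. The set $\{x>c(r)\}$ is forward invariant: where $x=c$ one has $\dot x=0$ while $\dot c<0$, so $x-c$ can only cross $0$ upwards; on that set $x$ is decreasing and bounded below by $c(r)>3/5$, so $x\to L\ge 3/5$, and $L>3/5$ is impossible since then $\dot x\le -c_0/t$ for large $r$ with $c_0>0$, and $\int^\infty dr/t=\infty$ (as $t\le r$), forcing $x\to-\infty$. In the complementary region $\{0<x<c(r)\}$, $x$ is increasing; either it eventually crosses into $\{x>c\}$, or it stays below, in which case $\limsup x\le\lim c=3/5$ and again a limit $L<3/5$ would force $x\to\infty$. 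Either way $x(r)\to 3/5$, i.e.\ $a(r)\to a_\infty=\tfrac35\sum_i\eta_i\otimes T_i$; linearising \eqref{eq:ODE} at $x=3/5$ then gives the rate and the $C^\infty$-convergence. (The only solution of \eqref{eq:ODE} on $E$ not asymptotic to $a_\infty$ is the flat one, $x\equiv 0$, i.e.\ $A_0$ below.)

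\emph{Case 1: $A|_Z$ the anti-self-dual Einstein connection.} Since the system \eqref{eq:ode_nu}--\eqref{eq:ode_t} has analytic right-hand side near $(t,\nu)=(0,\nu_0)$, the profiles $t,\nu$ are real-analytic, with $t$ odd and $\nu$ even, $t(r)=r\,\tau(r)$ for $\tau$ even analytic and $\tau(0)=1$. Putting $u=x-1$ and multiplying \eqref{eq:ODE} by $t$ one reaches a regular-singular equation of the shape
\[
r\,\frac{du}{dr}=-2u+r\,g(r,u)+O(u^2),\qquad g\ \text{real-analytic},\ \ g(r,0)=-\tfrac{4}{\nu_0^{2}}\,r+O(r^{3}),
\]
whose indicial exponent $-2$ is not a positive integer. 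Hence the formal power series solution with $u(0)=0$ is uniquely determined (it is even in $r$, with $u=-r^2/\nu_0^2+O(r^4)$), and by Malgrange's refinement of the existence--uniqueness theorem it converges. The resulting real-analytic solution has $x(0)=1>0$, so it is global and asymptotic to $a_\infty$ by the previous paragraph; $x$ is analytic in $r^2$, so the curvature \eqref{eq:Norm_curvature} stays bounded as $r\to 0$ and $A$ extends smoothly over $Z$ by Proposition~\ref{prop:Extending_Smoothly}, where by Corollary~\ref{cor:Extending_Smoothly} it restricts to the Levi-Civita connection of the anti-self-dual Einstein metric and so lives on $\pi^*E\to E$. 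Uniqueness of the formal solution is exactly uniqueness of $A_{\lim}$.

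\emph{Case 2: $A|_Z$ flat, and conclusion.} Substitute $x=r^2y(r)$ into \eqref{eq:ODE}. Using $r/t-1=O(r^2)$ and $t/\nu^2=O(r)$, every term of the transformed right-hand side carries both a factor $y$ and enough powers of $r$ that the equation becomes $\frac{dy}{dr}=r\,H(r,y)$ with $H$ real-analytic near $(0,y_0)$ and even in $r$; so for each $y_0$ there is a unique real-analytic solution with $y(0)=y_0$, automatically even in $r$. Thus $x$ is analytic in $r^2$, the curvature stays bounded, and $A$ extends smoothly over $Z$ (Proposition~\ref{prop:Extending_Smoothly}), where by Corollary~\ref{cor:Extending_Smoothly} it is flat, hence lives on $E\times\mathbb{C}^2$. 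Since every term of $H$ carries a factor $y$, $y\equiv 0$ solves the case $y_0=0$, giving the flat connection $A_0$. For $y_0>0$, $x=r^2y>0$ near $r=0$, so the solution is bounded and global (again by the corollary after Lemma~\ref{lem:Bounds}) and asymptotic to $a_\infty$; this produces the family $\{A_{y_0}\}_{y_0\ge 0}$. For $y_0<0$, $x<0$ near $r=0$ and $x$ never vanishes (Lemma~\ref{lem:Bounds}), so $x$ stays negative and \eqref{eq:ODE} behaves like $\dot x\sim -\tfrac2t x^2$, a Riccati-type blow-up in finite $r$: the instanton is only locally defined near $Z$. Together with the dichotomy of Corollary~\ref{cor:Extending_Smoothly} this exhausts all cases. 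The one genuinely delicate point is Case~1: checking that the reduced equation is non-resonant and of precisely the form to which Malgrange's theorem applies, so that the unique formal solution converges and no second, unbounded branch is admissible; the continuation to all $r$ and the asymptotics at the conical end are then routine given Lemma~\ref{lem:Bounds}.
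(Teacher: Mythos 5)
Your proposal is correct and follows essentially the same structure as the paper's proof: split according to Corollary~\ref{cor:Extending_Smoothly}, treat the $x\to1$ branch via a singular initial value problem resolved by Malgrange's theorem, treat the $x=O(r^2)$ branch by the standard existence/uniqueness theorem after pulling out a factor of $r$ from the vector field, and rule out $y_0<0$ by a Riccati-type blow-up comparison. The only two places where your bookkeeping differs are cosmetic. First, in Case~1 the paper substitutes $x=1+r^2y$ and writes the reduced equation as $\dot y=-\tfrac{4}{r}(y+\nu_0^{-2})+f(r,y)$, invoking Malgrange to force $y(0)=-\nu_0^{-2}$; you instead set $u=x-1$, multiply by $t$, and identify the indicial exponent $-2$ of $r\dot u=-2u+\dots$ as non-resonant. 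These are the same application of Malgrange expressed at two different scales (your $u$ is the paper's $r^2y$), and both correctly conclude uniqueness of $A_{\lim}$. Second, for the asymptotics at the conical end the paper changes time to $u=\int t^{-1}$ and argues that the right-hand side of the autonomous-looking equation must vanish in the limit; you instead argue via forward invariance of $\{x>c\}$ where $c=1-2t^2/\nu^2$, using $\dot c<0$ (which you correctly derive from $\nu^2-5t^2>0$) and $\int^\infty dr/t=\infty$ in each of the two monotone regimes. Both are elementary squeeze arguments; yours has the small advantage of not introducing the auxiliary variable $u$, at the cost of two sub-cases. Your parenthetical remark that $x\equiv 0$ (the flat $A_0$, $y_0=0$) is the lone solution not asymptotic to $a_\infty$ is a minor precision that the theorem statement glosses over and is worth keeping.
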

\begin{proof}
We start by addressing the initial value problem with the case when $x(r)=1+O(r^2)$, in which case we shall write $x(r)=1+r^2 y(r)$ for some real analytic $y$. Then, the ODE \ref{eq:ODE} turns into 
$$\frac{\partial y}{\partial r} = - \frac{4}{r} \left( y+  \frac{1}{\nu_0^2} \right) +  f(r,y),$$
where $f(r,y)$ is real analytic in both entries. Then, by a theorem of Malgrange in \cites{Malgrange1974}, see also Theorem 4.7 in \cites{Foscolo2015}, a local solution to this equation exists if and only if $y(0)=- \frac{1}{\nu_0^2}$.

We now turn our attention to the case of $x(r)=O(r^2)$, for that we shall write $x(r)=r^2 y(r)$, for some real analytic $y$. Then, using the Taylor expansion for the metric as in Equations \ref{eq:Metric_Taylor_Expansion_nu}--\ref{eq:Metric_Taylor_Expansion_t}, the ODE \ref{eq:ODE} turns into 
$$\frac{\partial y}{\partial r} = -2y \left( y+ \frac{4}{3 \nu^2} \right) r + f(r,y),$$
where $f(r,y)$ is real analytic in both entries and of strictly higher order in $r$. Thus, the standard local existence and uniqueness theorem for ODE's applies and guarantees the existence of a unique solution parametrized by $y(0)$. For $y(0)=0$ this solution is clearly $y=0$, so $x=0$ as well and corresponds to the flat connection. For $y(0) >0 $ (resp. $y(0)<0$) the resulting solution $x(r)$ is positive (resp. negative) for all $r>0$ as solutions cannot cross zero for $r>0$ (by the standard existence and uniqueness theorem again).

We start by analysing the case when $y(0)<0$, in which case by having in mind that $1-2t^2/\nu^2 >0$ we have 
$$\frac{\partial x}{\partial r} < - \frac{2x^2}{t(r)}$$ 
which is a separable differential inequality. This can be integrated to give
$$x(r)^{-1} > x(\epsilon)^{-1} + 2 \int_{\epsilon}^r \frac{dr}{t(r)},$$
for any $r>\epsilon>0$. Recall (see for example Equations \ref{eq:t_In_Terms_Of_nu}  and \ref{eq:dr^2})   that for large $r$ we have $t(r)=O(r)$ and so $\int_{\epsilon}^r \frac{dr}{t(r)} = O(\log(r))$ which is unbounded. As $x(\epsilon)=\epsilon^2y(\epsilon)<0$, there is $r_0 < + \infty$ such that the right hand side vanishes and so $x$ must explode at a finite distance to the $r=0$. This proves that the resulting instantons are only locally defined.

The remaining case is when $y(0)>0$, in which case $x(r)>0$ for all $r>0$. However, $x(r)$ leads off as $O(r^2)$ and so by falls into the case of the second bullet in Lemma \ref{lem:Bounds} and so is increasing satisfying
$$ 0 <  x <  1-\frac{2t^2}{\nu^2} .$$
Hence, the limit of $x_{\infty}= \lim_{r \rightarrow + \infty} x(r)$ exists and is finite. In fact, as we shall now show, we must have
$$x_{\infty}=  \lim_{r \rightarrow + \infty} \left( 1- \frac{2t^2}{\nu^2} \right)= \frac{3}{5}.$$ 
To show this, notice that since $x(r)$ has a finite positive limit when $r \rightarrow \infty$, we must have $\partial_r x$ converging to zero in that limit. Hence, returning to equation \ref{eq:ODE}, which in terms of $u(r)=\int_\varepsilon^r t(s)^{-1} ds$ becomes 
$$\frac{\partial x}{\partial u} = - 2x \left( x- \left( 1-\frac{2t^2}{\nu^2} \right) \right),$$
we must have that the right hand side goes to zero when $r \rightarrow + \infty$, i.e. $u (r) \sim \log (r^{3/5}) \rightarrow + \infty$. Again as $x$ is increasing it cannot go to zero, so that we must have
$$\lim_{r \rightarrow + \infty} x(r) = 1- \lim_{r \rightarrow + \infty} \frac{2t^2}{\nu^2} = \frac{3}{5}. $$
We shall now return to the solution with $x(0)=1$. Then, having in mind that $x(r)=1-\frac{r^2}{\nu_0^2}>1-\frac{2r^2}{\nu_0^2}$ is now in the setup of the first bullet of Lemma \ref{lem:Bounds} and doing exactly the same analysis also yields that this instanton is asymptotic to the canonical connection $a_{\infty}$. 
\end{proof}

\begin{remark}
	As alluded to during the proof of Corollary \ref{cor:Extending_Smoothly}, the instantons constructed in the previous Theorem live either on $\pi^*E$ or the trivial one $E \times \mathbb{C}^2$. 
\end{remark}

\section{Explicit formulas for the solutions}

In order to find explicit solutions to Equation \ref{eq:ODE}, and inspired by the form of those in the second bullet of Theorem \ref{thm:Solutions_ODE_Analaysis} we shall transform the Equation \ref{eq:ODE} as an equation for the function $y(r)=\frac{x(r)}{t(r)^2}$. Indeed, in terms of $y(r)$, the connection can be written as
$$A= t(r)^2 y(r) \sum_{i=1}^3 \eta_i \otimes T_i,$$
and given that $t(r) = r + \ldots$, comparing with the second bullet of Theorem \ref{thm:Solutions_ODE_Analaysis} we have $y(0)=\tilde{y}(0)=y_0$. Thus, the solutions in that second bullet are parametrized by $y(0)$. Regarding the Equation \ref{eq:ODE}, a simple computation shows that in terms of $y(r)$ it becomes
\begin{equation}\label{eq:ODE_2}
\frac{\partial y}{\partial r} = -2t(r) y^2(r).
\end{equation}
This can be integrated using the initial condition $y(0)=y_0$ to give
\begin{equation}
y_{y_0}(r)=\frac{y_0}{1 + 2 y_0 \int_0^r t(r) \ dr} .
\end{equation}
Indeed, in agreement with Theorem \ref{thm:Solutions_ODE_Analaysis} we can check that these exist in all of $E$ for $y_0\geq 0$ while for $y_0<0$ they explode at finite distance from $Z$, i.e. they are only defined for small $r$. Furthermore, when $y_0=0$ the solution corresponds to the flat connection. As in \cite{Lotay2018} we shall now consider the moduli space of these solutions, which we may parametrize by the coordinate $y_0 \in [0,\infty)$. As in that reference we expect that in the limit when $y_0 \rightarrow +\infty$, the instantons $A_{y_0}$ geometrically converge on the complement of $Z$ to a connection $A_{\lim}$ from the first bullet of Theorem \ref{thm:Solutions_ODE_Analaysis}.

Indeed, we can also write $A_{\lim}$ explicitly by naively taking the limit $y_0 \rightarrow +\infty$ for $r \neq 0$. This gives $y_{\lim}(t) = \frac{1}{2 \int_0^r t(r) dr}$ which corresponds to 
\begin{eqnarray*}
x_{\lim}= \frac{t(r)^2}{2 \int_0^r t(r) \ dr}.
\end{eqnarray*}
With this formula we can verify that $x_{\lim}$ satisfies Equation \ref{eq:ODE} and is of the form $x_{\lim}(r)=1+O(r^2)$.
In summary we have shown that

\begin{proposition}\label{prop:Explicit_Formulas}
	The $\Spin(7)$-instantons from Theorem \ref{thm:Solutions_ODE_Analaysis} can be explicitly written as
	$$A_{y_0} = \frac{y_0 \ t^2(r)}{1 + 2 y_0 \int_0^r t(s) \ ds} \sum_{i=1}^3 \eta_i \otimes T_i , \ \text{and} \ \ \ A_{\lim}=\frac{t^2(r)}{2 \int_0^r t(r) \ dr} \sum_{i=1}^3 \eta_i \otimes T_i . $$
\end{proposition}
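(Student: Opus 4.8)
The plan is to reduce Equation \ref{eq:ODE} to an elementary separable ODE via the substitution $y(r) = x(r)/t(r)^2$ already anticipated in Theorem \ref{thm:Solutions_ODE_Analaysis}, integrate it in closed form, and then match the resulting family against the classification there.

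First I would derive the ODE satisfied by $y$. From $x = t^2 y$ one gets $\partial_r x = 2 t \dot t\, y + t^2 \dot y$, while \ref{eq:ode_t} lets us rewrite $1 - 2t^2/\nu^2 = \dot t$, so Equation \ref{eq:ODE} reads $\partial_r x = -\tfrac{2}{t} x (x - \dot t) = -2 t^3 y^2 + 2 t \dot t\, y$. Equating and cancelling the common term $2 t \dot t\, y$ gives $t^2 \dot y = -2 t^3 y^2$, i.e. $\dot y = -2 t(r) y^2$, which is \ref{eq:ODE_2}. This short computation is the crux: it is precisely the identity $\partial_r t = 1 - 2 t^2/\nu^2$ that kills the nonautonomous coefficient and leaves a Bernoulli equation.

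Next I would integrate. A solution of $\dot y = -2 t y^2$ through a nonzero value stays nonzero, and there $\tfrac{d}{dr}(1/y) = 2 t$, whence $1/y(r) = 1/y_0 + 2\int_0^r t(s)\,ds$ and therefore $y_{y_0}(r) = y_0/(1 + 2 y_0 \int_0^r t(s)\,ds)$; multiplying by $t^2$ gives $x_{y_0}$ and hence the stated formula for $A_{y_0}$. Because $t > 0$ on $(0,\infty)$ with $t(r) = r + O(r^3)$ near $0$ and $t(r) = O(r)$ as $r\to\infty$ (so $\int_0^\infty t = \infty$), for $y_0 \ge 0$ the denominator stays $\ge 1$ and the instanton is global, while for $y_0 < 0$ it vanishes at a finite $r_0$, reproducing the blow-up in Theorem \ref{thm:Solutions_ODE_Analaysis}. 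To pin down the parametrization I would expand near $r = 0$: $t(r)^2 = r^2(1 + O(r^2))$ and $y_{y_0}(r) = y_0 + O(r^2)$, so $x_{y_0}(r) = r^2(y_0 + O(r^2))$, matching the normalization $A_{y_0} = r^2\tilde y(r)\sum_i\eta_i\otimes T_i$ with $\tilde y(0) = y_0$ of the theorem's second bullet; uniqueness of the ODE solution through this initial data then identifies our formula with the instanton produced there.

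Finally, for $A_{\lim}$, I would let $y_0 \to +\infty$ to get $y_{\lim}(r) = 1/(2\int_0^r t(s)\,ds)$ for $r > 0$ and $x_{\lim} = t^2 y_{\lim}$, check directly that $\tfrac{d}{dr}(1/y_{\lim}) = 2t$ so that $y_{\lim}$ solves \ref{eq:ODE_2} and hence $x_{\lim}$ solves \ref{eq:ODE}, and observe that near $r = 0$ one has $2\int_0^r t(s)\,ds = r^2(1 + O(r^2)) = t(r)^2(1 + O(r^2))$, so $x_{\lim}(r) = 1 + O(r^2)$. By the uniqueness assertion in the first bullet of Theorem \ref{thm:Solutions_ODE_Analaysis}, $x_{\lim}$ is exactly the connection $A_{\lim}$. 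No serious obstacle is expected; the only care needed is in the $O(r^2)$ bookkeeping that fixes the parameter and in invoking uniqueness to identify $A_{\lim}$ rather than merely exhibiting a solution with the correct leading term.
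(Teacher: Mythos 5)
Your proposal is correct and takes essentially the same route as the paper: the substitution $y=x/t^2$, the reduction to the separable equation $\dot y = -2t y^2$, and the integration and matching of initial conditions are all exactly what the paper does, with the paper only remarking that ``a simple computation shows'' the reduction where you spell out that the identity $\dot t = 1-2t^2/\nu^2$ from \ref{eq:ode_t} is what cancels the cross term. Your slightly more careful invocation of the uniqueness assertion in Theorem \ref{thm:Solutions_ODE_Analaysis} to pin down $A_{\lim}$ is a small improvement in rigor over the paper's ``we can verify'' but is not a different argument.
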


\begin{remark}
	 We can rewrite the explicit formulas above in terms of $\nu^2$ instead of $r$. This is done using the formulas \ref{eq:t_In_Terms_Of_nu} and \ref{eq:nu_In_Terms_Of_Int_t} for $t^2(r)$ and $\int_0^r t(s) ds$ respectively, which we derive in the Appendix. As a result of these and a short computation we obtain
	 $$A_{y_0} = \frac{3}{5} \left( 1+ \frac{\nu_0^2-3/y_0}{\nu^2-\nu_0^2 + 3/y_0} - \left(\frac{\nu_0}{\nu}\right)^{\frac{4}{3}} \frac{\nu_0^2}{\nu^2-\nu_0^2 + 3/y_0} \right) \sum_{i=1}^3 \eta_i \otimes T_i ,$$
	 and also
	 $$A_{\lim} = \frac{3}{5} \left( 1+ \frac{\nu_0^2}{\nu^2-\nu_0^2 } - \left(\frac{\nu_0}{\nu}\right)^{\frac{4}{3}} \frac{\nu_0^2}{\nu^2-\nu_0^2 } \right) \sum_{i=1}^3 \eta_i \otimes T_i .$$
\end{remark}

Recall from Theorem \ref{thm:Solutions_ODE_Analaysis} that both $A_{\lim}$ and the $A_{y_0}$ are asymptotic to the canonical $\G_2$-instanton $a_{\infty}$ on the nearly parallel $G_2$-manifold $X$. However, equipped with the formulas in Proposition \ref{prop:Explicit_Formulas} it is easy to refine our study of the asymptotic behaviour of these instantons. In particular, we shall be able to show some new phenomena which did not appear in the analogous case of \cites{Lotay2018} for $\G_2$-instantons. In that case the analogue of $A_{\lim}$ was asymptotic to the relevant limiting connection at a strictly faster rate than $A_{y_0}$. For us this will not be the case and we will show that $A_{\lim}$ is asymptotic to $a_{\infty}$ at the same rate as the connection $A_{y_0}$ does (for generic $y_0$). However, there is one specific value $y_{0}=3/\nu_0^2$ for which $A_{3/\nu_0^2}$ decays at a strictly faster rate. The precise statement is the following

\begin{proposition}\label{prop:Asymptotics}
	Let $A$ be one of the (non-flat) $\Spin(7)$-instantons constructed in Theorem \ref{thm:Solutions_ODE_Analaysis} and defined in the whole total space of $E$. Then $A$ is asymptotic to the canonical $\G_2$-instanton $a_{\infty}$ defined in Example \eqref{ex:cone_Connection} on the nearly parallel asymptotic cross section $X^7$. Then, 
	\begin{itemize}
		\item If $A$ lives on $\pi^*E$, i.e. if $A=A_{\lim}$ then for sufficiently large $r$
		$$| A_{\lim} - a_{\infty} | \leq C\nu_0^2 r^{-3} ,$$
		for some positive constant $C>0$ independent of $y_0$;
		\item If $A$ lives on the trivial $\mathbb{C}^2$-bundle, i.e. $A=A_{y_0}$ for some $y_0 > 0$, then
		$$|A_{y_0} - a_{\infty}| \leq C | \nu_0^2  -3/y_0| r^{-3} .$$
	\end{itemize}
\end{proposition}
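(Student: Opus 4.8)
The plan is to read both bounds off the explicit closed forms, since $A_{y_0}$ and $A_{\lim}$ each differ from $a_\infty$ only through a single explicit scalar factor. \textbf{Step 1 (crude asymptotics of the metric data).} Every complete solution of \eqref{eq:ode_nu}--\eqref{eq:ode_t} lies in the region $\nu^2>5t^2$ (Figure~\ref{fig:Countorplot}), so \eqref{eq:ode_t} gives $\tfrac{3}{5}<\partial_r t\le 1$, whence $\tfrac{3}{5}r<t(r)<r$ for $r>0$; substituting into \eqref{eq:ode_nu} and integrating from $r=0$ yields $\tfrac{9}{5}r^2+\nu_0^2<\nu(r)^2<3r^2+\nu_0^2$. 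In particular $\nu(r)\ge\nu_0$, $\nu(r)^2-\nu_0^2\ge\tfrac{9}{5}r^2$ and $t(r)^{-1}\le\tfrac{5}{3}r^{-1}$ for all $r>0$; and for the sharpness comments below I will also use $t(r)/r\to\tfrac{3}{5}$, $\nu(r)^2/r^2\to\tfrac{9}{5}$, which follow from the fact established in Section~\ref{sec:Structure} that the solution curve tends to the line $\nu^2=5t^2$.

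\textbf{Step 2 (pointwise norms).} Subtracting $a_\infty=\tfrac{3}{5}\sum_i\eta_i\otimes T_i$ from the $\nu$-parametrized formulas in the Remark after Proposition~\ref{prop:Explicit_Formulas} gives
$$A_{\lim}-a_\infty=\frac{3}{5}\,\frac{\nu_0^2\bigl(1-(\nu_0/\nu)^{4/3}\bigr)}{\nu^2-\nu_0^2}\sum_{i=1}^3\eta_i\otimes T_i,\qquad A_{y_0}-a_\infty=\frac{3}{5}\,\frac{(\nu_0^2-3/y_0)-\nu_0^2(\nu_0/\nu)^{4/3}}{\nu^2-\nu_0^2+3/y_0}\sum_{i=1}^3\eta_i\otimes T_i.$$
With respect to $\langle\,\cdot\,,\,\cdot\,\rangle=-\tr(\,\cdot\,\cdot\,)$ the $T_i$ are orthogonal of equal length, while $g_\varphi$ has $|\eta_i|_{g_\varphi}=t(r)^{-1}$, so $\bigl|\sum_i\eta_i\otimes T_i\bigr|_{g_\varphi}=c_0\,t(r)^{-1}$ for a fixed $c_0>0$. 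Hence the pointwise norms are the explicit scalar functions $|A_{\lim}-a_\infty|=\tfrac{c_0}{t}\cdot\tfrac{\nu_0^2\,|1-(\nu_0/\nu)^{4/3}|}{\nu^2-\nu_0^2}$ and $|A_{y_0}-a_\infty|=\tfrac{c_0}{t}\cdot\tfrac{|(\nu_0^2-3/y_0)-\nu_0^2(\nu_0/\nu)^{4/3}|}{\nu^2-\nu_0^2+3/y_0}$ (the norm on one-forms along $X$ agrees whether computed for $g_\varphi$ or for $g_\Theta$, by the orthogonal splitting $g_\Theta=dr^2\oplus g_\varphi$).

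\textbf{Step 3 (estimates).} For $A_{\lim}$ use $|1-(\nu_0/\nu)^{4/3}|\le 1$ (as $\nu\ge\nu_0$) and Step~1: $|A_{\lim}-a_\infty|\le c_0\cdot\tfrac{5}{3}r^{-1}\cdot\tfrac{5}{9}r^{-2}\,\nu_0^2=C\nu_0^2r^{-3}$ with $C$ absolute, which is the first bullet. For $A_{y_0}$ with $y_0>0$ one has $\nu^2-\nu_0^2+3/y_0\ge\nu^2-\nu_0^2\ge\tfrac{9}{5}r^2$, while $\nu_0^2(\nu_0/\nu)^{4/3}\le c_1\nu_0^{10/3}r^{-4/3}\to 0$; hence once $r$ is large enough (depending on $y_0,\nu_0$) that $\nu_0^2(\nu_0/\nu)^{4/3}\le\tfrac12|\nu_0^2-3/y_0|$, the numerator is at most $\tfrac32|\nu_0^2-3/y_0|$ and the same bounds give $|A_{y_0}-a_\infty|\le C|\nu_0^2-3/y_0|r^{-3}$, which is the second bullet. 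At the exceptional value $y_0=3/\nu_0^2$ the constant part of the numerator cancels and the denominator becomes $\nu^2$, leaving numerator $\nu_0^2(\nu_0/\nu)^{4/3}=O(r^{-4/3})$ and hence $|A_{3/\nu_0^2}-a_\infty|=O(r^{-13/3})$: strictly faster decay. Conversely, inserting the sharp asymptotics $t\sim\tfrac{3}{5}r$, $\nu^2\sim\tfrac{9}{5}r^2$ back into the displayed formulas shows that for $y_0\ne 3/\nu_0^2$ the $r^{-3}$ coefficient is a nonzero multiple of $|\nu_0^2-3/y_0|$, and that $A_{\lim}$ also decays at exactly the rate $r^{-3}$, which is the feature absent from the $\G_2$ situation of \cites{Lotay2018}.

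The only genuine obstacle is keeping the constants uniform in the right parameters. For $A_{\lim}$ the bound must be independent of $y_0$, which is automatic since $A_{\lim}$ does not involve $y_0$; for $A_{y_0}$ one must keep the numerator $(\nu_0^2-3/y_0)-\nu_0^2(\nu_0/\nu)^{4/3}$ unsplit — estimating its two summands separately would produce a spurious $\nu_0^2r^{-3}$ term and kill the vanishing at $y_0=3/\nu_0^2$ — and one must isolate that borderline value carefully (so the second estimate is understood with $C$ absolute but the threshold in $r$ depending on $y_0$, $\nu_0$, and degenerating into the faster rate $r^{-13/3}$ precisely when $y_0=3/\nu_0^2$). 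Everything else is elementary algebra with the closed forms of Proposition~\ref{prop:Explicit_Formulas} together with the ODE bounds of Step~1.
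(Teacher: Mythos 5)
Your proposal is correct and follows essentially the same route as the paper: pass to the $\nu$-parametrized closed forms (the Remark after Proposition~\ref{prop:Explicit_Formulas}), isolate the scalar factor multiplying $\sum_i\eta_i\otimes T_i$, use $|\eta_i|_{g_\Theta}=t^{-1}$, and convert $\nu$-decay into $r$-decay. Where you improve on the paper's write-up is in replacing its informal ``$\nu=O(r)$'' and the $\ldots$ in the expansion $\tfrac{3}{5}\bigl(1+\tfrac{\nu_0^2y_0-3}{y_0\nu^2}+\ldots\bigr)$ by honest two-sided bounds $\tfrac{3}{5}r<t<r$, $\tfrac{9}{5}r^2<\nu^2-\nu_0^2<3r^2$ extracted directly from \eqref{eq:ode_nu}--\eqref{eq:ode_t}, and in explicitly flagging that the threshold in $r$ for the second bullet must depend on $y_0,\nu_0$ and that the estimate is vacuous at the special value $y_0=3/\nu_0^2$, where the decay improves to $O(r^{-13/3})$ --- a subtlety the paper relegates to the Remark following the proposition rather than stating in the proof itself.
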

\begin{proof}
We start with the computation for $A_{y_0}$, for which we may use the explicit formulas above and recall that for $t^2(r)$ and $\int_0^r t(s) ds$ we have explicit formulas in terms of $\nu$, noting that for large $r$, $\nu=O(r)$. We also note here that the estimates are calculated with respect to the metric $g_\Theta$ on $1$-forms. These are given in the Appendix, namely in equations \ref{eq:t_In_Terms_Of_nu} and \ref{eq:nu_In_Terms_Of_Int_t}. Then, for large $r$ and so large $\nu$ we may compute
\begin{eqnarray*}
A_{y_0} & = & \frac{y_0 \ t^2(r)}{1 + 2 y_0 \int_0^r t(s) \ ds} \sum_{i=1}^3 \eta_i \otimes T_i = \frac{3}{5} \frac{1}{\nu^{4/3}} \frac{\nu^{10/3}-\nu_0^{10/3}}{\nu^{2}-\nu_0^{2} +3/y_0} \sum_{i=1}^3 \eta_i \otimes T_i \\
& = & \frac{3}{5} \left( 1 + \frac{\nu_0^2 y_0 -3}{y_0\nu^2} + \ldots \right) \sum_{i=1}^3 \eta_i \otimes T_i = a_{\infty} + \frac{3}{5}  \frac{\nu_0^2 y_0 -3}{y_0\nu^2} \sum_{i=1}^3 \eta_i \otimes T_i + \ldots, 
\end{eqnarray*}
where the $\ldots$ stand for lower order terms. As a consequence of these and using the fact that $|\eta_i| \sim t^{-1} \sim \nu^{-1}$ we compute that there is a constant $C>0$ such that
\begin{equation}\nonumber
|A_{y_0} - a_{\infty}| \leq C \left| \frac{\nu_0^2 y_0 -3}{y_0}\right|\nu^{-3}.
\end{equation}
Proceeding in an analogous way we compute
\begin{eqnarray*}
	A_{\lim} & = & \frac{t^2(r)}{ 2 \int_0^r t(s) \ ds} \sum_{i=1}^3 \eta_i \otimes T_i = \frac{3}{5} \frac{1}{\nu^{4/3}} \frac{\nu^{10/3}-\nu_0^{10/3}}{\nu^{2}-\nu_0^{2} } \sum_{i=1}^3 \eta_i \otimes T_i \\
	& = & \frac{3}{5} \left( 1 + \frac{\nu_0^2 }{\nu^2} + \ldots \right) \sum_{i=1}^3 \eta_i \otimes T_i = a_{\infty} + \frac{3}{5}  \frac{\nu_0^2}{\nu^2} \sum_{i=1}^3 \eta_i \otimes T_i + \ldots, 
\end{eqnarray*}
and so $| A_{\lim} - a_{\infty} | \leq C\nu_0^2/\nu^3 $. To get this as stated recall that $\nu(r) = O(r)$ and so by possibly changing the constant $C>0$ we obtain the stated result.
\end{proof}

\begin{remark}
	The reader may have noticed that there is a special situation occurring in the previous theorem. This is the case when $y_0=3/\nu_0^2$ for which the connection is
	$$A=\frac{3}{5} \left( 1-  \left( \frac{\nu_0}{\nu} \right)^{10/3} \right) \sum_{i=1}^3 \eta_i \otimes T_i,$$
	and so is asymptotic to $a_{\infty}$ at a higher rate of $O(r^{-13/3})$. 
\end{remark}

\section{Moduli space compactness}

As done in \cite{Lotay2018} for the analogous case of $\G_2$-instantons, we shall show that as $y_0 \rightarrow + \infty$ the $\Spin(7)$-instantons $A_{y_0}$ geometrically converge to $A_{\lim}$ away from the zero section $Z$. As we shall also show, this is a Cayley submanifold along which the energy density concentrates and an anti-self-dual (asd) connection ``bubbles off'' along the transverse directions. One may interpret the fact that $A_{\lim}$ smoothly extends across $Z$ as an example of a removable singularity phenomenon.

\subsection{Bubbling and removable singularities}

We start with some preparation which is convenient for rigorously stating the bubbling phenomenon. Let $z \in Z$ and $\lambda >0$, then we define the scaled exponential map $s_{z,\lambda}$, with domain the unit ball $B_1\subseteq N_zZ\cong \mathbb{R}^4$ in the normal bundle to $Z$, as
\begin{eqnarray*}
s_{z,\lambda} : B_1\subseteq N_zZ \rightarrow B_{\lambda}(z)
  \subseteq E, \ \ x \mapsto \exp_z(\lambda x).
\end{eqnarray*}

Now we recall the basic ASD instanton on $\mathbb{R}^4$ with scale $\kappa >0$. In polar coordinates on $\mathbb{R}^4 \backslash {0}=\mathbb{R}^+_r \times S^3$, this is given by
\begin{equation}\label{eq:Basic_Instanton}
A^{asd}_{\kappa}= \frac{\kappa r^2}{1+\kappa r^2} \sum_{i=1}^3T_i\otimes\eta_i ,
\end{equation}
where these $\eta_i$ are a standard left-invariant coframing of $\SU(2)$.

\begin{theorem}\label{thm:Compactness}
	Let $\lbrace A_{y_0} \rbrace_{y_0}$ with $y_0 \nearrow + \infty$ be a sequence of $\Spin(7)$-instantons constructed in Theorem \ref{thm:Solutions_ODE_Analaysis}. Then, the following are true:
	\begin{itemize}
		\item[(a)] Given any $\kappa>0$, there is a nullsequence $\lambda=\lambda(y_0,\kappa)$ with the following significance: for all $z \in Z$, $(s_{z,\lambda})^* A_{y_0}$ uniformly converges (with all derivatives) to $A^{asd}_{\kappa}$ on $B_1\subseteq\mathbb{R}^4$ as in \eqref{eq:Basic_Instanton}. 
		\item[(b)] The connections $A_{y_0}$ uniformly converge (with all derivatives) to $A_{\lim}$ on all compact subsets of $E \backslash Z $. 
	\end{itemize}
\end{theorem}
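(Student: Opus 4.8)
The plan is to prove both assertions by direct asymptotic analysis of the explicit formulas in Proposition~\ref{prop:Explicit_Formulas}, writing
$$A_{y_0}= c_{y_0}(r)\sum_{i=1}^3\eta_i\otimes T_i,\qquad c_{y_0}(r)=\frac{y_0\,t^2(r)}{1+2y_0\int_0^r t(s)\,ds},$$
and $A_{\lim}= c_\infty(r)\sum_i\eta_i\otimes T_i$ with $c_\infty(r)=t^2(r)\big/\big(2\int_0^r t(s)\,ds\big)$. On $E\setminus Z\cong(0,\infty)_r\times X$ both bundles $E\times\C^2$ and $\pi^*E$ are canonically trivialised by the radial gauge and the two connections may be compared there. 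The functions $t$ and $\int_0^r t$ are real-analytic and, by \eqref{eq:Metric_Taylor_Expansion_t}, $t(r)=r-\tfrac{2}{3\nu_0^2}r^3+\dots$, so I would write $t^2(r)=r^2 g(r)$ and $\int_0^r t(s)\,ds=\tfrac12 r^2 h(r)$ with $g,h$ real-analytic, depending only on $\nu_0$, and $g(0)=h(0)=1$.

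For part (b): on a compact $K\subseteq E\setminus Z$ the radial coordinate lies in an interval $[r_1,r_2]$ with $0<r_1\le r_2<\infty$, so $\int_0^r t\ge\delta:=\int_0^{r_1}t>0$ on $K$. Then
$$A_{y_0}-A_{\lim}=-\,\frac{t^2(r)}{2\int_0^r t}\cdot\frac{1}{1+2y_0\int_0^r t}\ \sum_{i=1}^3\eta_i\otimes T_i,$$
whose coefficient and all of its $r$-derivatives are $O(1/y_0)$ uniformly on $[r_1,r_2]$: every differentiation of $(1+2y_0\int_0^r t)^{-1}$ again produces a bounded quantity over $(1+2y_0\int_0^r t)\ge 2y_0\delta$, while derivatives along $X$ only hit the fixed forms $\eta_i$. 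Hence $A_{y_0}\to A_{\lim}$ in $C^\infty(K)$ at rate $O(1/y_0)$, giving (b).

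For part (a): the key is to identify $s_{z,\lambda}^*$. Since each curve $r\mapsto(r,x)$ is a unit-speed geodesic of $g_\Theta$ that limits to $z=\pi(x)\in Z$ as $r\to0$ with initial velocity a unit vector $v(x)\in N_zZ$ (the fibre directions being $g_\Theta$-orthogonal to $T_zZ$), uniqueness of geodesics gives $(r,x)=\exp_z\big(r\,v(x)\big)$; moreover the induced map $v$ from the round unit sphere of $N_zZ$ onto $(\pi^{-1}(z),\sum\eta_i^2)$ is an $\SU(2)$-equivariant isometry, by matching the two limits $\rho^{-2}\exp_z^*g_\Theta|_{\{|\cdot|=\rho\}}\to g_{S^3(1)}$ and $t(r)^{-2}g_\Theta|_{\{r\}\times\pi^{-1}(z)}\to\sum\eta_i^2$. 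Consequently $s_{z,\lambda}(\rho\theta)=(\lambda\rho,x_\theta)$, so on $B_1\subseteq N_zZ\cong\R^4$
$$s_{z,\lambda}^*A_{y_0}=c_{y_0}(\lambda\rho)\sum_{i=1}^3\eta_i\otimes T_i,$$
where the $\eta_i$ are now the standard left-invariant coframing, \emph{independent of $\lambda$}. Taking $\lambda=\lambda(y_0,\kappa):=\sqrt{\kappa/y_0}$, which is a nullsequence and satisfies $y_0\lambda^2=\kappa$, one gets
$$c_{y_0}(\lambda\rho)=\frac{\kappa\rho^2\,g(\lambda\rho)}{1+\kappa\rho^2\,h(\lambda\rho)}\ \xrightarrow[\ y_0\to\infty\ ]{}\ \frac{\kappa\rho^2}{1+\kappa\rho^2}$$
in $C^k([0,1])$ for every $k$, since $g(\lambda\,\cdot),h(\lambda\,\cdot)\to1$ in $C^k$ as $\lambda\to0$ by Taylor's theorem, at rate $O(\lambda)=O(\sqrt{\kappa/y_0})$; as none of this depends on $z$, convergence is uniform over the compact $Z$. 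Comparing with \eqref{eq:Basic_Instanton} yields $s_{z,\lambda}^*A_{y_0}\to A^{asd}_\kappa$ with all derivatives on $B_1$, which is (a). The one-variable limits are routine; the \textbf{main obstacle is the geometric bookkeeping in (a)} — verifying that the abstract scaled exponential map $s_{z,\lambda}$ is compatible with the explicit ``radial $\times$ fibre'' coordinates, i.e.\ that its image lies in the fibre $E_z$, that $s_{z,\lambda}^*r=\lambda\rho$ exactly, and that $v$ is an $\SU(2)$-equivariant isometry so that $s_{z,\lambda}^*\eta_i$ is precisely the coframing appearing in $A^{asd}_\kappa$ (up to a constant gauge transformation absorbed into the choice of basis $T_i$). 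Once this dictionary is in place both statements reduce to elementary manipulations of the explicit formulas.
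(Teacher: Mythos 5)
Your proposal is correct and follows essentially the same route as the paper: both arguments are direct asymptotic analyses of the explicit formulas from Proposition~\ref{prop:Explicit_Formulas}, with the choice $\lambda=\sqrt{\kappa/y_0}$ for part (a) and a lower bound on $\int_0^r t$ over compact $K\subset E\setminus Z$ for part (b). The only substantive difference is that you explicitly justify the identification of $s_{z,\lambda}$ with the $(r,x)$-coordinates via the geodesics $r\mapsto(r,x)$ and the $\SU(2)$-equivariant isometry of the fibre sphere, a compatibility that the paper's proof uses implicitly (it merely notes earlier that $r\mapsto(r,x)$ is a geodesic) and which is indeed needed for the pullback computation to be meaningful.
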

\begin{proof}
	(a) We start with the proof of the first item. For this we use the explicit formula in Proposition \ref{prop:Explicit_Formulas} and the expansion of $t(r)$ in (\ref{eq:Metric_Taylor_Expansion_t}). Given these we compute 
	$$
	(s_{x,\lambda})^* A_{y_0}  =  \frac{y_0 \ t^2(\lambda r)}{1 + 2 y_0 \int_0^{\lambda r} t(s) \ ds} \sum_{i=1}^3 \eta_i \otimes T_i   =  \frac{ y_0 \lambda^2 r^2 + O(y_0 (\lambda r)^4) }{1+y_0 \lambda^2 r^2 + O(y_0 (\lambda r)^4)} \sum_{i=1}^3 \eta_i \otimes T_i .
	$$
	Choosing $\lambda =  \sqrt{\kappa / y_0}$ and letting $k \in \mathbb{N}_0$ we have
	\begin{eqnarray*}
		\Vert (s_{x,\lambda})^* A_{y_0} - A^{asd}_{\kappa} \Vert_{C^k(B_1)} \leq C_k y_0 \lambda^4 = \frac{\kappa^2}{y_0},
	\end{eqnarray*}
	for some positive constant $C_k$ not depending on $\kappa$ and $y_0$. As a consequence, for all $\epsilon >0$ there is $y_0 \geq C_k \kappa^2/ \epsilon$  such that
 	$$\Vert (s_{x,\lambda})^* A_{y_0}  - A^{asd}_{\kappa} \Vert_{C^k(B_1)} \leq \epsilon,$$
	which proves that indeed $ (s_{x,\lambda})^* A_{y_0}$ converges uniformly with all derivatives to $A^{asd}_{\kappa}$.\\
	(b) We now turn to the proof of the second claim. This is an easy consequence of the explicit formulas for $A_{y_0}$ and $A_{\lim}$ in Proposition \ref{prop:Explicit_Formulas}. Indeed, given these we have
	\begin{eqnarray}\nonumber
	\left| A_{y_0} - A_{\lim} \right| & = &  \left| \frac{y_0 \ t^2(r)}{1 + 2 y_0 \int_0^r t(s) \ ds} - \frac{ t^2(r)}{ 2 \int_0^r t(s) \ ds}   \right| \left| \sum_{i=1}^3T_i\otimes \eta_i \right| \\ \nonumber
	& \leq & C \frac{ t(r)  }{ 2 \int_0^r t(s) \ ds}  \left| \frac{ 2 y_0 \int_0^r t(s) \ ds }{1 + 2 y_0 \int_0^r t(s) \ ds} - 1  \right| \\
	& \leq & C \frac{ t(r)  }{ 2 \int_0^r t(s) \ ds}  \Big\vert \frac{1 }{1 + 2 y_0 \int_0^r t(s) \ ds}   \Big\vert
	\end{eqnarray}
	for some constant $C>0$. The first term above is independent of $y_0$ and on any compact set $K \subset E \backslash Z$ is uniformly bounded by some constant $C_K$ depending only on $K$ and not on $y_0$. As for the second term, this clearly converges to $0$ as $y_0 \nearrow + \infty$. In fact, by possibly changing the constant $C_K>0$, we have
	\begin{equation}
	\vert A_{y_0} - A_{\lim} \vert  \leq  \frac{C_K}{1+y_0},
	\end{equation}
	on $K \subset E \backslash Z$. Similar computations show that analogous estimates hold true for all the derivatives of $A_{y_0}-A_{\lim}$. Hence, as $y_0 \nearrow + \infty$ the right-hand side of these goes to zero and the result follows.
\end{proof}

\begin{remark}
	The fact that $A_{y_0} \rightarrow A_{\lim}$ away from the Cayley submanifold $Z$ but $A_{\lim}$ smoothly extends across $Z$ can be interpreted as an example of a removable singularity phenomenon. This is a special case of a more general result of Tao and Tian \cites{Tao2004} to which we have actually already appealed in Proposition \ref{prop:Extending_Smoothly}.
\end{remark}

\subsection{Energy conservation}

 We shall now follow the same strategy of \cites{Lotay2018} and prove an energy conservation formula for these instantons when we pass to the limit $y_0 \nearrow + \infty$. Indeed, as in that reference, the instantons we consider here have infinite energy, and so the results of \cites{Tian2000} cannot be naively applied. In order to make sense of an energy conservation formula we shall interpret the energy densities as currents and prove a weak convergence statement for these. Let $\delta_{Z}$ denote the current of integration associated with the zero section $Z \subset E$, which we recall is a Cayley submanifold. It is in fact the unique such compact suborbifold (or submanifold if $Z=\mathbb{S}^4$).

\begin{theorem}\label{thm:delta}
	Let $y_0>0$, then the function $| F_{A_{y_0}} |^2 - | F_{A_{\lim}} |^2$ is integrable. Moreover, as $y_0 \nearrow + \infty$ these functions converge as currents to $8 \pi^2 \delta_{Z}$. That is, for all compactly supported $f \in C^{\infty}_0(E, \mathbb{R})$ 
	$$\lim_{y_0 \rightarrow + \infty} \int_{E} f  (| F_{A_{y_0}} |^2 - \vert F_{A_{\lim}} |^2)  = 8 \pi^2 \int_{Z} f ,$$
	where in the latter integral we use the induced metric on the Cayley $Z \subset E$.
\end{theorem}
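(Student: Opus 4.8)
The plan is to localise near the Cayley submanifold $Z$, where the connections $A_{y_0}$ develop an anti-self-dual bubble, and to show that away from $Z$ the difference of the two energy densities contributes nothing in the limit $y_0\to+\infty$. For the integrability assertion: by Theorem~\ref{thm:Solutions_ODE_Analaysis} and Corollary~\ref{cor:Extending_Smoothly}, $A_{y_0}$ is a smooth connection on $E\times\mathbb C^2$ and $A_{\lim}$ one on $\pi^*E$, both with bounded curvature, so $|F_{A_{y_0}}|^2$ and $|F_{A_{\lim}}|^2$ are smooth bounded functions on $E$ (decaying at the conical end by Proposition~\ref{prop:Asymptotics}); hence $|F_{A_{y_0}}|^2-|F_{A_{\lim}}|^2$ is locally integrable and defines a current which one may pair with any $f\in C_0^\infty(E,\mathbb R)$.

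Fix such an $f$. For $\epsilon>0$ set $U_\epsilon=\{r<\epsilon\}\subset E$, a tubular neighbourhood of $Z$, and split $\int_E f\,(|F_{A_{y_0}}|^2-|F_{A_{\lim}}|^2)\,\dvol$ into the integrals over $U_\epsilon$ and over $E\setminus U_\epsilon$. On $(\supp f)\setminus U_\epsilon$, which is compact, Theorem~\ref{thm:Compactness}(b) gives $A_{y_0}\to A_{\lim}$ in $C^\infty$, so for each fixed $\epsilon$ the integral over $E\setminus U_\epsilon$ tends to $0$ as $y_0\to+\infty$; and since $|F_{A_{\lim}}|^2$ is bounded while $\vol(U_\epsilon)=O(\epsilon^4)$, the $A_{\lim}$-part of the $U_\epsilon$-integral is $O(\epsilon^4)$. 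Thus everything reduces to computing $\lim_{y_0\to\infty}\int_{U_\epsilon}f\,|F_{A_{y_0}}|^2\,\dvol$ modulo $O(\epsilon^4)$.

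On $U_\epsilon\setminus Z\cong(0,\epsilon)_r\times X$ I would substitute the explicit instanton from Proposition~\ref{prop:Explicit_Formulas}, $A_{y_0}=x_{y_0}(r)\sum_i\eta_i\otimes T_i$ with $x_{y_0}(r)=y_0\,t(r)^2/(1+2y_0\int_0^r t)$, the curvature norm~\eqref{eq:Norm_curvature}, the expansions~\eqref{eq:Metric_Taylor_Expansion_nu}--\eqref{eq:Metric_Taylor_Expansion_t}, and $\dvol_{g_\Theta}=c\,t^3\nu^4\,dr\wedge\mu_X$, where $\mu_X$ is the fixed volume form of the $3$-Sasakian metric on $X$. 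Since $|F_{A_{y_0}}|^2$ depends only on $r$, the integral factors as $\int_0^\epsilon F(r)\,h_{y_0}(r)\,dr$ with $F(r)=\int_X f(r,\cdot)\,\mu_X$ (so $F(0)$ is a fixed positive multiple of $\int_Z f\,\dvol_Z$), and near $r=0$
\begin{equation*}
h_{y_0}(r)=\frac{c'\,y_0^2 r^3}{(1+y_0 r^2)^4}\,\bigl(1+O(r)\bigr),
\end{equation*}
the leading part coming from the $(\partial_r x)^2/t^2$ and $x^2(x-1)^2/t^4$ terms of~\eqref{eq:Norm_curvature}. Rescaling $r=\rho/\sqrt{y_0}$ (the bubble scale) and using dominated convergence — the remainders carry positive powers of $\rho/\sqrt{y_0}$ and stay bounded by multiples of $\rho^k(1+\rho^2)^{-4}\in L^1(0,\infty)$ — gives $\int_{U_\epsilon} f\,|F_{A_{y_0}}|^2\,\dvol\to c'\,F(0)\int_0^\infty\rho^3(1+\rho^2)^{-4}\,d\rho+O(\epsilon^4)$. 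Since $\int_0^\infty\rho^3(1+\rho^2)^{-4}\,d\rho=\tfrac1{12}$, one is left to check that the constant in front of $\int_Z f\,\dvol_Z$ is exactly $8\pi^2$; this is forced by Theorem~\ref{thm:Compactness}(a), whereby after the matching rescaling $(s_{z,\lambda})^*A_{y_0}$ converges to the basic charge-$1$ instanton $A^{asd}_\kappa$ on $N_zZ\cong\mathbb R^4$, whose Yang--Mills energy $\int_{\mathbb R^4}|F_{A^{asd}_\kappa}|^2=8\pi^2$ is scale invariant. Combining the three estimates,
\begin{equation*}
\limsup_{y_0\to\infty}\Bigl|\int_E f\,(|F_{A_{y_0}}|^2-|F_{A_{\lim}}|^2)\,\dvol-8\pi^2\int_Z f\,\dvol_Z\Bigr|\le C\epsilon^4
\end{equation*}
for every $\epsilon>0$, and $\epsilon\to0$ finishes the proof.

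I expect the main obstacle to be precisely this near-$Z$ analysis: carrying out the parabolic rescaling rigorously — controlling the errors from the Taylor expansions of $t$, $\nu$, $x_{y_0}$ and from the variation of $f$ along $X$, and above all pinning the limiting constant down to exactly $8\pi^2$ rather than some other multiple of $\pi^2$, which forces one to keep track of all the normalisation constants of the $3$-Sasakian structure — equivalently, of the identity matching the horizontal part of $\psi$ near $Z$ with $\dvol_Z$. A conceptually cleaner route to the constant uses the pointwise identity $|F_A|^2\,\dvol=\pm\,d\bigl(\mathrm{CS}(A)\wedge\Theta\bigr)$ valid for $\Spin(7)$-instantons (Bianchi together with $d\Theta=0$): Stokes on $U_\epsilon\setminus\{r<\delta\}$, with $\delta\to0$, converts the near-$Z$ contribution into $8\pi^2$ times the relative second Chern number of $A_{y_0}$ against $A_{\lim}$ along the normal $\mathbb R^4$'s — which is $\pm1$ precisely because the limit lives on the topologically distinct bundle $\pi^*E$ — while the $\{r=\epsilon\}$ boundary term vanishes as $y_0\to+\infty$ by Theorem~\ref{thm:Compactness}(b).
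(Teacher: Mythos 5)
Your overall strategy --- localise near the Cayley $Z$, rescale $r=\rho/\sqrt{y_0}$ to the bubble scale, and use Theorem~\ref{thm:Compactness}(b) to kill the contribution away from $Z$ --- is mathematically the same as the paper's, just phrased in $r$ rather than in $\nu^2$: the paper's substitution $x=\nu^2-\nu_0^2$ with the singular factor $(x+3/y_0)^{-4}$ is precisely your parabolic rescaling, and its conclusion that only the $n=2$ term in \eqref{eq:Curvature_Difference} survives corresponds to your observation that $(\partial_r x)^2/t^2$ and $x^2(x-1)^2/t^4$ produce the leading $\rho^3(1+\rho^2)^{-4}$. That said, two steps in your write-up are genuine gaps.

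\emph{Integrability over all of $E$.} You argue that the two curvature densities are bounded and decay at the conical end, so the difference ``is locally integrable''; but the theorem claims integrability over $E$, and nothing in what you cite gives it. Proposition~\ref{prop:Asymptotics} only controls $|A_{y_0}-a_\infty|$ and $|A_{\lim}-a_\infty|$ to $O(r^{-3})$ (and only in $C^0$; it says nothing about curvatures). The naive consequence --- each of $|F_{A_{y_0}}|^2-|F_{a_\infty}|^2$ and $|F_{A_{\lim}}|^2-|F_{a_\infty}|^2$ is $O(r^{-6})$ --- paired with $\dvol_g\sim r^{7}\,dr$ gives a divergent $\int r\,dr$. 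What makes the theorem true is a cancellation between the two terms improving the decay of $|F_{A_{y_0}}|^2-|F_{A_{\lim}}|^2$ to $O(r^{-10})$, and that is exactly what the paper's explicit expression \eqref{eq:Curvature_Difference} (with the $N_n$ all $O((\nu^2)^5)$) exhibits. You need to display this cancellation; it does not follow from the ingredients you invoke.

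\emph{The constant $8\pi^2$.} Your rescaling and dominated convergence correctly isolate the near-$Z$ contribution as $c'F(0)\cdot\tfrac1{12}$, and you then assert that $c'F(0)/12=8\pi^2\int_Z f$ is ``forced'' by Theorem~\ref{thm:Compactness}(a). It is not. Part (a) gives $C^k$ convergence of $(s_{z,\lambda})^*A_{y_0}$ to $A^{asd}_\kappa$ only on the unit ball $B_1\subset N_zZ$, at the scale $\lambda=\sqrt{\kappa/y_0}$; the energy $\int_{B_1}|F_{A^{asd}_\kappa}|^2$ is strictly less than $8\pi^2$ for any fixed $\kappa$, and upgrading to convergence of the full transverse energy requires controlling the annulus $B_R\setminus B_1$ for arbitrary $R$ and the neck region between the bubble scale and $\{r=\epsilon\}$ --- none of which is in your outline. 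The clean way to close this, and what the paper does, is simply to compute $c'$: near $r=0$ one has $t\approx r$, $\nu\approx\nu_0$, and \eqref{eq:Norm_curvature} gives $|F_{A_{y_0}}|^2\approx 48\,y_0^2/(1+y_0r^2)^4$, while $\dvol_g\approx r^3\nu_0^4\,dr\wedge\dvol_{g^*}$ and $\nu_0^4\mathrm{Vol}^*=2\pi^2\mathrm{Vol}(Z)$; multiplying through by $\tfrac1{12}$ returns $8\pi^2\mathrm{Vol}(Z)$, which is what the paper's coefficient $f_2(\nu_0^2)=3888\nu_0^4$ encodes. As for your proposed Chern--Simons shortcut, the identity $|F_A|^2\dvol=-d\bigl(\mathrm{CS}(A)\wedge\Theta\bigr)$ is correct only on a trivialised bundle, and $A_{\lim}$ lives on $\pi^*E$, which trivialises over $E\setminus Z$ but not across $Z$; the $\delta\to0$ boundary term and the meaning of ``relative second Chern number along the normal $\mathbb R^4$'s'' then need real work, so as written it is an appealing heuristic rather than a proof.
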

\begin{proof}
	First, using the fomula \eqref{eq:Norm_curvature} for the norm of the curvature we compute
	\begin{eqnarray}\label{eq:Curvature_Difference}
	| F_{A_{y_0}} |^2 - | F_{A_{\lim}} |^2 = \sum_{n=0}^3   \frac{ y_0^n (\nu^2-\nu_0^2)^n N_n(\nu,t) }{\nu^4 (\nu^2-\nu_0^2)^4 (y_0 (\nu^2-\nu_0^2) +3)^4} ,
	\end{eqnarray}
	for some completely explicit $N_n(\nu^2)$ of the form 
	\begin{eqnarray}\nonumber
	N_3(\nu^2) & = & (\nu^2-\nu_0^2)^4 f_3(\nu^2) \\ \nonumber
	N_2(\nu^2) & = & (\nu^2-\nu_0^2)^2 f_2(\nu^2) \\ \nonumber
	N_1(\nu^2) & = & (\nu^2-\nu_0^2)^4 f_1(\nu^2) \\ \nonumber
	N_0(\nu^2) & = & (\nu^2-\nu_0^2)^4 f_0(\nu^2) ,
	\end{eqnarray}
	with the $f_n(\nu^2)$ nonvanishing at $\nu^2=\nu_0^2$. For future reference we further mention that 
	\begin{equation}\label{eq:}
	f_2(\nu_0^2)=3888\nu_0^4.
	\end{equation}
	Now, recall that $6t d r= d\nu^2$ and so $\dvol_g = t^3 \nu^4 dr \wedge \dvol_{g^*}= \tfrac{1}{6} t^2 \nu^4 d\nu^2 \wedge \dvol_{g^*}$, where $g^*$ denotes the metric associated with the $\G_2$-structure $\varphi^* = \eta_{123}-\tfrac{s}{48} (\eta_1 \wedge \omega_1 + \eta_2 \wedge \omega_2 + \eta_3 \wedge \omega_3)$ on $X \cong r^{-1}(s), \ \forall_{s>0}$. Thus, as for large $r$ we have $t^2 =O(\nu^2)$, the integrability of $| F_{A_{y_0}} |^2 - | F_{A_{\lim}} |^2$ follows immediately from this formula and the fact that for all $n=0,1,2,3$ the $ (\nu^2-\nu_0^2)^nN_n=O((\nu^2)^5)$. Indeed, putting all this together we have 
	$$(| F_{A_{y_0}} |^2 - | F_{A_{\lim}} |^2) \dvol_g \sim (\nu^2)^{5+1-8} d \nu^2 \wedge \dvol_{g^*},$$
	and $ (\nu^2)^{-2} d \nu^2$ is integrable for large $\nu^2$.\\
	We shall now compute the current obtained as the limit of $| F_{A_{y_0}} |^2 - | F_{A_{\lim}} |^2$ as $y_0 \rightarrow + \infty$ and show it equals $8 \pi^2 \delta_Z$, the delta current of the Cayley $Z \subset E$. To achieve this it is enough to show that
	\begin{eqnarray}\label{eq:Limit}
	\lim_{y_0 \rightarrow + \infty}  \int_K (| F_{A_{y_0}} |^2 - | F_{A_{\lim}} |^2) \dvol_g 
	\end{eqnarray}
	vanishes for all compact $K \subset E \backslash Z$ and equals $8 \pi^2  \mathrm{Vol}(Z)$ for any $K$ containing $Z$. Implicit in this statement is the definition of $\delta_Z$ and the observation that each $| F_{A_{y_0}} |^2 - | F_{A_{\lim}} |^2$ is of the form $f_{y_0}(r)$ for some $f: \mathbb{R}_0^+ \rightarrow \mathbb{R}$ integrable with respect to the measure $t^3(r) \nu^4(r) \ dr$ on $\mathbb{R}^+$, and so is enough to consider subsets $K \subset E$ which can be written as sublevel sets of $r:E \rightarrow \mathbb{R}$.
	
	First, consider the case of a compact $K \subset E \backslash Z$. Recall, from part (b) of Theorem \ref{thm:Compactness}, that $| F_{A_{y_0}} |^2 - | F_{A_{\lim}} |^2$ converges uniformly to $0$ in $K$, and so from the dominated convergence theorem it follows that \eqref{eq:Limit} vanishes.
	
	In the case where $Z \subset K$ we have
	\begin{eqnarray}\nonumber
	\lim_{y_0 \rightarrow + \infty} & & \int_K (| F_{A_{y_0}} |^2 - | F_{A_{\lim}} |^2) \dvol_g = \\ \nonumber
	& &  = \lim_{y_0 \rightarrow + \infty} \int_0^{+ \infty} ds \ \int_{r^{-1}(s) \cap K} (| F_{A_{y_0}} |^2 - | F_{A_{\lim}} |^2) t^3(s) \nu^4(s) \dvol_{g^*}  \\ \nonumber
	& &  = \frac{1}{6} \lim_{y_0 \rightarrow + \infty} \int_{\nu_0^2}^{+ \infty} d\nu^2 \ \int_{r^{-1}(s) \cap K} (| F_{A_{y_0}} |^2 - | F_{A_{\lim}} |^2) t^2 \nu^4 \dvol_{g^*} 
	\end{eqnarray}
	and analyse what happens independently to each of the terms appearing in the formula \ref{eq:Curvature_Difference}. Denote by $\mathrm{Vol^*}$ the volume of $r^{-1}(1)$ with respect to $g^*$. Then, for $n \neq 2$, each of these takes the form
	\begin{eqnarray}\nonumber
	I_{n} & := &  \lim_{y_0 \rightarrow + \infty} \frac{y_0^{n-4}}{6} \int_{\nu_0^2}^{\nu_0^2 + \epsilon}  \frac{ (\nu^2-\nu_0^2)^n N_n(\nu,t)t^2 }{ (\nu^2-\nu_0^2)^4 ( \nu^2-\nu_0^2 +3/y_0 )^4}  \ d\nu^2 \ \mathrm{Vol^*} \\ \nonumber
	& = &  \lim_{y_0 \rightarrow + \infty} \frac{y_0^{n-4}}{18} \int_{\nu_0^2}^{\nu_0^2 + \epsilon}  \frac{ (\nu^2-\nu_0^2)^{n+1} f_n(\nu) }{ ( \nu^2-\nu_0^2 +3/y_0 )^4}  \ d\nu^2 \ \mathrm{Vol^*} + \ldots \\ \nonumber
	& = &  \lim_{y_0 \rightarrow + \infty}  \frac{y_0^{n-4}}{18}  \int_{0}^{\epsilon}  \frac{ x^{n+1} f_n(\nu_0) }{  ( x +3/y_0)^4}  \ dx \ \mathrm{Vol^*} + \ldots\\ \nonumber
	& = & \lim_{y_0 \rightarrow + \infty} O(y_0^{-2}) \\ \nonumber
	& = & 0 ,
	\end{eqnarray}
	where in the first equality we used the fact that for small $r$ we have $t^2= \frac{1}{3}(\nu^2-\nu_0^2)-\frac{1}{9}(\nu^2-\nu_0^2)^2 + \ldots$ with the $\ldots$ denoting higher order terms.\\
	As a consequence of this computation we have
	\begin{equation}
	\lim_{y_0 \rightarrow + \infty} | F_{A_{y_0}} |^2 - | F_{A_{\lim}} |^2  = \lim_{y_0 \rightarrow + \infty}  \frac{ y_0^2 (\nu^2-\nu_0^2)^4 f_2(\nu) }{\nu^4 (\nu^2-\nu_0^2)^4 (y_0 (\nu^2-\nu_0^2) +3)^4},
	\end{equation}
	and so
	\begin{align}\nonumber
	\lim_{y_0 \rightarrow + \infty}&\int_{K}  \frac{ y_0^2 (\nu^2-\nu_0^2)^4 f_2(\nu) }{\nu^4 (\nu^2-\nu_0^2)^4 (y_0 (\nu^2-\nu_0^2) +3)^4} \dvol_g \ \mathrm{Vol^*} \\ \nonumber
	& = \lim_{y_0 \rightarrow + \infty} \frac{y_0^{-2}}{18}  \int_{0}^{\epsilon}  \frac{ x f_2(\nu_0) }{  ( x +3/y_0)^4}  \ dx \ \mathrm{Vol^*} + \ldots \\ \nonumber
	& = \frac{f_2(\nu_0) }{18 \times 54}  \mathrm{Vol^*} \\ \nonumber
	& = 4 \nu_0^4  \mathrm{Vol^*}
	\end{align} 
	Now recall that $\nu_0^4\mathrm{Vol^*} = \mathrm{Vol}(\mathbb{S}^3_1) \mathrm{Vol}(Z)=2\pi^2 \mathrm{Vol}(Z)$ and the claimed result follows.
\end{proof}

\begin{remark}
 Notice that $8 \pi^2$ is the energy of the charge-$1$ instanton on $\mathbb{R}^4$, i.e. the bubble. Hence, the above really is an ``energy conservation'' formula, analogous to that of \cites{Lotay2018}, and a generalization of that in \cites{Tian2000} to the infinite energy context.
\end{remark}

\subsection{Fueter sections}\label{subsec:Fueter}

As in the analogous case of $\G_2$-instantons a sequence of $\Spin(7)$-instantons with curvature concentrating along a Cayley submanifold gives rise to a Fueter section of a certain moduli bundle as in \cites{Donaldson2009,Haydys2011,Walpuski2017}. We shall now investigate the particular situation of our set-up and what this Fueter section is.\\

The moduli space $\mathcal{M}$ of framed charge $1$ instantons in $\mathbb{R}^4$ may be written as
$$ \mathcal{M} \cong \left(\Hom_{\mathbb{R}}(\mathbb{C}^2,s^+)  \backslash \lbrace 0 \rbrace  \right) / \mathbb{Z}_2 \times \mathbb{R}^4,$$
where $s^+$ is the $4$-dimensional spin representation of positive chirality and $\Hom_{\mathbb{R}}(\mathbb{C}^2,s^+)  := \Re ( \Hom(\mathbb{C}^2,s^+) )$. This carries an $\SU(2) \times \SO(4)$ action, with $\SU(2)$ acting on the frames in $\Hom_{\mathbb{R}}(\mathbb{C}^2, s^+) $ by precomposition and $\SO(4)$-acting on $\mathbb{R}^4$ by rotations. Then, let $\Fr(NZ)$ the $\SO(4)$-bundle of frames in the normal directions and view $E$ as an $\SU(2)$-bundle. We can define the charge $1$-instanton moduli bundle as
$$\underline{\mathcal{M}} := (E \times \Fr(NZ) ) \times_{\SU(2) \times \SO(4)} \mathcal{M}.$$
Using $\mathcal{S}^+$ to denote the positive spinor bundle of $Z$, the Cayley condition implies that $\mathcal{S}^+ \cong \mathcal{S}^+_{NQ}$. Having this in mind and unravelling through the above identifications we may write
$$\underline{\mathcal{M}} \cong \left(\Hom_{\mathbb{R}}(E,\mathcal{S}^+)  \backslash \lbrace 0 \rbrace  \right) / \mathbb{Z}_2 \times NZ.$$
Furthermore, noticing that in our case both $E \cong NZ$ and also $E \cong \mathcal{S}^+$, so that
$$\underline{\mathcal{M}} \cong \left(\Hom_{\mathbb{R}}(E,E)  \backslash \lbrace 0 \rbrace  \right) / \mathbb{Z}_2 \times E.$$
Now, the bundle $E$ has a quaternionic structure induced by the $\lbrace \frac{s}{24} \ \omega_i \rbrace_{i=1}^3$. Associated to this we have a so called Fueter operator $F$ on sections $s$ of $\underline{\mathcal{M}}$ with values in $\Hom(TZ,V \underline{\mathcal{M}})$, where
$$V\underline{\mathcal{M}} := (E \times \Fr(NZ) ) \times_{\SU(2) \times \SO(4)} T\mathcal{M}.$$
given by
$$F(s)= \nabla s - \sum_{i=1}^3 I_i\nabla_{I_i} s ,$$
where $I_i$ here is seen as acting both on $TZ$ and $NZ$ by using the identification $\Lambda^2_+ Z \cong \Lambda^2_+ NZ$, possible by $Z$ being Cayley. A section $s$ of $\underline{\mathcal{M}}$ is called a Fueter section if $F(s)=0$. In the case we describe above there is an obvious Fueter section which up to scaling is given by
$$s=([\id_E],0),$$
where $[\id_E]$ denotes the equivalence class formed by $\id_E$ and $-\id_E$. 

\begin{remark}
This suggests a possible strategy to find applications of Thomas Walpuski's de-singularization Theorem in \cites{Walpuski2017_Spin}. This is the case when the underlying bundle $E_0$ carrying the initial $\Spin(7)$-instanton restricts to $Z$ as $\mathcal{S}^+$.
\end{remark}

\appendix

\section{Explicit formulas for the metric and its asymptotics}

Recall that $t(r)$ and $\nu(r)$ must lie on a level set of $\nu^4(\nu^2-5t^2)^3$ and for the metric to be (backwards) complete\footnote{It is always forward complete.} it must be a positive level set of this quantity. Indeed, as at $r=0$ we have $t(0)=0$ and $\nu(0)=\nu_0$ parametrizes the solutions which thus satisfy $\nu^4(\nu^2-5t^2)^3=\nu_0^{10}$, or in other words
\begin{equation}\label{eq:t_In_Terms_Of_nu}
t^2 = \frac{\nu^{\frac{10}{3}} - \nu_0^{\frac{10}{3}}}{5 \nu^{\frac{4}{3}}} 
\end{equation}
Furthermore, one can check that $t$ is positive and thus, by the first evolution equation \ref{eq:ode_nu}, $\nu$ is increasing. We may therefore change variables from $r$ to $\nu$ which we may write as
\begin{equation}\label{eq:nu_In_Terms_Of_Int_t}
\nu^2(r)= \nu_0^2 + 6 \int_0^r t(s) ds.
\end{equation}
Somewhat more convenient is to write $dr^2 = (\frac{\partial r}{\partial \nu})^2 d \nu^2$ and use the inverse function theorem together with \ref{eq:t_In_Terms_Of_nu} to write this solely in terms of $\nu$ as
\begin{equation}\label{eq:dr^2}
dr^2 = 	\frac{5}{9} \frac{\nu^{\frac{10}{3}}}{\nu^{\frac{10}{3}} - \nu_0^{\frac{10}{3}}}  d \nu^2.
\end{equation}
From this, the metric can be written as

\begin{eqnarray}
g_{\Theta}=\frac{5}{9}\frac{1}{1-\left(\nu_0\nu^{-1}\right)^{10/3}}d\nu^2 +\frac{1}{5}\frac{1}{1-\left(\nu_0\nu^{-1}\right)^{10/3}}\nu^2\sum_i\eta_i\otimes \eta_i +\frac{s\nu^2}{48} \pi^* g_Z.
\end{eqnarray}


\bibliographystyle{plain}	
\bibliography{refsa}

\end{document}